\documentclass[12pt]{amsart} %amsfonts,
\usepackage{amsmath,amssymb,amscd,amsthm,fullpage}
\usepackage{latexsym}
\usepackage{colortbl}

\newtheorem{theorem}{Theorem}

\newtheorem{remark}{Remark}

\newtheorem{corollary}{Corollary}

%DELIMETERS

\renewcommand{\[}{\left[}
\renewcommand{\(}{\left(}
\renewcommand{\]}{\right]}
\renewcommand{\)}{\right)}
\newcommand{\conv}{\mathop{\textup{conv}}}
\renewcommand{\le}{\leqslant}
\renewcommand{\ge}{\geqslant}

%GREEK LETTERS

\newcommand{\ga}{\gamma}

\newcommand{\la}{\lambda}

%SETS or SPACES
\newcommand{\R}{\mathbb R}              % real numbers
   % unit sphere

\begin{document}
\hfill\today
\bigskip
\author{Jaegil Kim, Vladyslav Yaskin and Artem Zvavitch}

\address{Department of Mathematics, Kent State University,
Kent, OH 44242, USA} \email{jkim@math.kent.edu}

%\address{Department of Mathematics, Kent State University,
%Kent, OH 44242, USA} \email{ryabogin@math.kent.edu}

\dedicatory{Dedicated to the memory of Nigel J. Kalton, 1946-2010.}
\address{ Department of Mathematical and
Statistical Sciences,
University of Alberta,
Edmonton, AB, T6G 2G1,
Canada } \email{vladyaskin@math.ualberta.ca}

\address{Department of Mathematics, Kent State University,
Kent, OH 44242, USA} \email{zvavitch@math.kent.edu}

\thanks{The first and third authors were supported in part by U.S.~National Science Foundation grant
 DMS-0652684. The second author was supported in part by NSERC} \subjclass[2010]{Primary:  44A12, 52A15, 52A21} \keywords{$p$-convex body, star-shaped body, intersection  body, Busemann's theorem}
\title{The geometry of $p$-convex intersection bodies}

\begin{abstract} Busemann's theorem states that the intersection body of an origin-symmetric convex body is also convex.
In this paper we provide a version of Busemann's theorem for $p$-convex bodies. We show that the intersection body of a $p$-convex body is $q$-convex for certain $q$. Furthermore, we discuss the sharpness of the previous result  by constructing an appropriate example. This example is also used to show that $IK$, the intersection body of $K$, can be much farther away from the Euclidean ball than $K$.  Finally, we extend these theorems to some general measure spaces with log-concave and $s$-concave measures. \end{abstract}

\maketitle

\section{Introduction and Notations}

A {\em body} is   a compact  set with nonempty interior. For a body $K$ which is star-shaped with respect to the origin its {\em radial function}  is defined by
$$
\rho_K(u) = \max \{\la \ge 0 : \la u \in K\} \quad\mbox{for every } u\in S^{n-1}.
$$
A body $K$ is called a {\em star body} if it is star-shaped at the origin and its radial function $\rho_K$ is positive and continuous.
The {\em Minkowski functional} $\|\cdot\|_K$   is defined by $\|x\|_K = \min \{\la \ge 0 : x \in \la K\}$. Clearly, $\rho_K(u)=\|u\|_K^{-1}$, for $u \in S^{n-1}$. Moreover, we can assume that this identity holds for all $u\in \mathbb R^n\setminus\{0\}$ by extending $\rho_K$ from the sphere to $\mathbb R^n\setminus\{0\}$ as a homogeneous function of degree $-1$.

In \cite{Lu1}, Lutwak introduced the notion of the {\em intersection body} $IK$ of a star body $K$. $IK$ is defined by its radial function $$\rho_{IK}(u)= |K\cap u^\bot| , \quad \mbox{ for   } u \in S^{n-1}.$$
Here and throughout the paper,   $u^\perp$ denotes the hyperplane perpendicular to $u$, i.e.
$ u^\perp=\{x\in \R^n: x\cdot u=0\}$.  By $|A|_k$, or simply $|A|$ when there is no ambiguity, we denote the $k$-dimensional Lebesgue measure of a set $A$.

%A more general class of intersection bodies was introduced by  Goodey, Lutwak and Weil \cite{GLW}.
%Lutwak also discovered that intersection bodies are an essential tool in the solution to the Busemann- Petty problem.
We refer the reader to the books \cite{Ga2}, \cite{Ko}, \cite{KoY} for more information on the definition and properties of intersection bodies, and their applications in convex geometry and geometric tomography.

In this paper we are interested in the properties of the operator $I$ that assigns to a star body $K$ its intersection body $IK$.
One of the well-known results is the classical Busemann theorem   (\cite{Bu}, see also \cite{Ba1}, \cite{Ba2} and  \cite{MP}).

\begin{theorem}\label{th:Busemann} {
Let $K$ be an origin-symmetric convex body in $\R^n$. Then its intersection body $IK$ is convex.
}
\end{theorem}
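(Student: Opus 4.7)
The plan is to reduce convexity of $IK$ to a two-dimensional Busemann-type inequality for a marginal of $K$ onto a fixed 2-plane, and then establish that inequality via Brunn-Minkowski.

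\textbf{Reduction to 2-planes.} Since $IK$ is origin-symmetric and star-shaped, it is convex if and only if $IK\cap H$ is convex for every 2-plane $H\subset \R^n$ through the origin. Fix such an $H$. For $u\in H\cap \Sp$ the hyperplane $u^\perp$ contains $H^\perp$, so $u^\perp=(u^\perp\cap H)\oplus H^\perp$. Applying Fubini,
\begin{equation*}
\rho_{IK}(u) \;=\; |K\cap u^\perp| \;=\; \int_{u^\perp\cap H} F(x)\,dx,
\end{equation*}
where $F\colon H\to [0,\infty)$ is the marginal $F(x)=|K\cap(x+H^\perp)|_{n-2}$. Since $K$ is convex and origin-symmetric, $F$ is even, and Brunn-Minkowski applied to the $(n-2)$-dimensional slices $K\cap(x+H^\perp)$ shows that $F^{1/(n-2)}$ is concave on its (convex) support.

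\textbf{Minkowski functional form.} Identifying $H\cong \R^2$, convexity of $IK\cap H$ is equivalent to subadditivity of its Minkowski functional, i.e.\ to the claim that for all $u_1,u_2\in H\setminus\{0\}$,
\begin{equation*}
\frac{|u_1+u_2|}{\int_{(u_1+u_2)^\perp\cap H} F}\;\le\;\frac{|u_1|}{\int_{u_1^\perp\cap H} F}+\frac{|u_2|}{\int_{u_2^\perp\cap H} F}. \tag{$\ast$}
\end{equation*}

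\textbf{The 2-dimensional inequality.} Inequality $(\ast)$ is a purely two-dimensional statement about even $1/(n-2)$-concave functions on $\R^2$. I would prove it using the Borell-Brascamp-Lieb inequality (the $1/(n-2)$-concave refinement of Pr\'ekopa-Leindler), after a linear change of coordinates in $H$ chosen to normalize the three lines $u_1^\perp\cap H$, $u_2^\perp\cap H$ and $(u_1+u_2)^\perp\cap H$. This is the main obstacle: the three perpendicular lines through the origin are not parallel, so Brunn-Minkowski cannot be applied naively to a one-parameter family of parallel sections. The content of Busemann's argument is precisely that the Minkowski-functional form $(\ast)$ is the right reformulation, and that a well-chosen change of variables reduces the three rotated line integrals to a configuration to which Borell-Brascamp-Lieb applies directly.
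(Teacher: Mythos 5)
Your reduction is fine and matches the standard setup: convexity of $IK$ does follow from convexity of $IK\cap H$ for every $2$-plane $H$ through the origin, the Fubini identity $\rho_{IK}(u)=\int_{u^\perp\cap H}F$ with $F(x)=|K\cap(x+H^\perp)|_{n-2}$ is correct, Brunn--Minkowski does give that $F^{1/(n-2)}$ is concave and even, and subadditivity of the Minkowski functional, your inequality $(\ast)$, is indeed equivalent to convexity of $IK\cap H$. But at that point you stop: $(\ast)$ is precisely the content of Busemann's theorem, and you do not prove it. The deferral to ``a linear change of coordinates in $H$ chosen to normalize the three lines'' followed by a direct application of Borell--Brascamp--Lieb does not work as stated: no linear map makes the three distinct lines $u_1^\perp\cap H$, $u_2^\perp\cap H$, $(u_1+u_2)^\perp\cap H$ parallel, and Borell--Brascamp--Lieb compares integrals over a one-parameter family of \emph{parallel} translates, so it cannot be applied ``directly'' to the three rotated line integrals. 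The missing idea is a nonlinear, measure-adapted reparametrization, not a linear normalization.

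Concretely, the step you owe is Busemann's equal-mass change of variables, which is exactly how this paper argues (in the more general $p$-convex setting of its Theorem on sections through ${\rm span}(u,E)$; the case $p=1$, $k=n-1$ is the classical statement). Writing $\rho(u_i)=\int_{-\infty}^{\infty}|K\cap(ru_i+E)|\,dr$, one defines $r_1(t),r_2(t)$ by requiring that both directions have accumulated the same fraction $t\in[0,1/2]$ of their total mass, sets $r=(r_1^{-1}+r_2^{-1})^{-1}$ with weights $\la_i$ so that $ru=\la_1r_1u_1+\la_2r_2u_2$, $\la_1+\la_2=1$, bounds $dr/dt$ from below by the weighted geometric mean of $\frac{1}{r_i}\frac{dr_i}{dt}=\frac{\rho(u_i)}{|K\cap(r_iu_i+E)|}$ via AM--GM, controls $|K\cap(ru+E)|$ from below by Brunn--Minkowski applied to the convex combination of the slices at $r_1u_1+E$ and $r_2u_2+E$, and then integrates over $t\in[0,1/2]$ to obtain $\rho(u_1+u_2)^{-1}\le\rho(u_1)^{-1}+\rho(u_2)^{-1}$, which is $(\ast)$. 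Until you carry out this (or an equivalent) argument, your proposal records the correct reformulation of the problem but not a proof of it.
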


Recently, a new  proof of Busemann's theorem
was established by Berck \cite{Be},  who also generalized the theorem to the case of  $L_p$ intersection bodies (see  \cite{GG}, \cite{HL},  \cite{YY} and  \cite{Ko} for more information on the theory of $L_p$ intersection bodies).

The development of the theory of intersection bodies shows that it is not natural to restrict ourselves to the class of convex bodies, and in fact, in many situations one has to deal with bodies which are not necessarily convex. How does $I$ act on these bodies?   In this paper we will answer this question for the class of $p$-convex bodies.

 Let $p\in (0,1]$. We say that a  body $K$ is
$p$-convex if, for all $x, y \in \R^n$,
$$
\|x+y\|^p_K \le \|x\|^p_K+\|y\|^p_K,
$$
or, equivalently $t^{1/p}x+(1-t)^{1/p}y \in K$ whenever $x$ and $y$
are in $K$ and $t\in (0,1)$. One can see that
 $p$-convex sets with $p=1$ are just convex. Note also that a $p_1$-convex body is $ p_2$-convex for all $0<p_2\le p_1$.
There is an extensive literature on $p$-convex bodies as well as the closely related concept of quasi-convex bodies, see for example \cite{A}, \cite{BBP1}, \cite{BBP2}, \cite{D}, \cite{GoK},  \cite{GoL}, \cite{GuL}, \cite{Ka}, \cite{KaT}, \cite{KaPR}, \cite{Li1}, \cite{Li2}, \cite{Li3}, \cite{LMP}, \cite{LMS}, \cite{M} and others.

The first question that we consider is the following.
 {\it Let $K$ be an origin-symmetric $p$-convex body. Is the intersection body $IK$   necessarily $q$-convex for some $q$?
 }

In Section 2, we prove that if $K$ is $p$-convex and symmetric then its intersection body $IK$ is $q$-convex for all  $q \le \left[
\big(\frac 1p-1\big)(n-1) + 1 \right]^{-1}$. Furthermore, we construct an example showing that this upper bound   is asymptotically sharp.

Another important question about the operator $I$ comes from works of  Lutwak \cite{Lu2} and Gardner \cite[Prob. 8.6, 8.7]{Ga2} (see also \cite{GrZ}). It is easy to see that the intersection body of a Euclidean ball is again a Euclidean ball. A natural question is whether there are other fixed points of $I$. In order to measure the distance between star  bodies  we will be using the Banach-Mazur distance
$d_{BM}(K,L)=\inf \{b/a: \exists T \in GL(n): aK \subset TL \subset bK \}$ (see \cite{MS}).
In \cite{FNRZ} it is shown that in a sufficiently small neighborhood of the ball (with respect to the Banach-Mazur distance) there are no other fixed points of the  intersection body operator. However, in general this question is still open.
In view of this it is natural to ask   the following question.

{\it Does $IK$ have to be closer to the ball than $K$?  }

In Section 2 we show that the answer is ``No". There are $p$-convex bodies for which the intersection body is farther from the Euclidean ball.

It is worth noting that, in the convex situation,  there exists an absolute constant $C>0$ such that
$d_{BM}(IK,B_2^n)<C$ for all origin-symmetric convex bodies $K$  (see \cite{He}, \cite{Ba1},  \cite{Ba2}, \cite{Bou}, \cite{MP} or Corollary 2.7 in \cite{Ko}). The example in Section 2 shows that this statement is wrong if we only assume $p$-convexity for $p<1$, and  $d_{BM}(IK,B_2^n)$, for a $p$-convex body $K$, can be as large as   $C_p^n$, where $C_p>1$ is a certain constant that depends only on $p$.

In recent times a lot of attention has been attracted to the study of log-concave measures. These are measures whose densities are log-concave functions. The interest to such measures stems from the Brunn-Minkowski inequality, and many results known for convex bodies are now generalized to log-concave measures,  see for example \cite{Ba1}, \cite{Ba2},  \cite{AKM}, \cite{KlM}, \cite{FM}, \cite{Pa} and the references cited therein.

In Section 3 we study  intersection bodies in  spaces with log-concave measures. Namely, let $\mu$ be a log-concave measure on $\mathbb R^n$ and $\mu_{n-1}$ its restrictions to $(n-1)$-dimensional subspaces.  Define the {\em intersection body} $I_\mu K$ of a star body $K$ {\em with respect to $\mu$} by
$$
\rho_{I_\mu K}(u) = \mu_{n-1}(K\cap u^\bot),\quad u \in S^{n-1}.
$$
We show that if $K$ is an origin-symmetric $p$-convex body  and   $\mu$ is a symmetric and log-concave measure, then $I_\mu K$ is $q$-convex for all $q \le \left[
\big(\frac 1p-1\big)(n-1) + 1 \right]^{-1}$. The proof uses a version of Ball's theorem \cite{Ba1}, \cite{Ba2} for $p$-convex bodies. Namely, we show that if $f$ is an even non-negative log-concave function on $\mathbb R^n$, $k\ge 1$, and $K$  is a $p$-convex body in $\R^n$, $0<p\le 1$, then the body $L$ defined by the Minkowski functional
$
\|x\|_L = \left[\int_0^{\|x\|_K^{-1}} f(rx)r^{k-1}dr \right]^{-1/k}, $ $x\in\R^n, $
is $p$-convex.

If the measure $\mu$ is not symmetric, the situation is different, as explained at the end of Section 3.  $L$ defined above does not have to be $q$-convex for any $q>0$. However, if we consider $s$-concave measures,  $0<s<1/n$, that are not necessarily symmetric, then the above construction defines a body $L$ that is $q$-convex for all $q\le \[\big(\frac 1p-1\big)\big(\frac 1s-n\big)\frac 1k + \frac 1p\]^{-1}$. We also show that this bound is sharp. This is the content of Section 4.

%As before, the proof is based on a version of Ball's theorem for $p$-convex bodies in $s$-concave measure spaces.

\noindent {\bf Acknowledgment}. We are indebted to Alexander Litvak and Dmitry Ryabogin for
valuable discussions.

\section{$p$-convexity and related results}

Here we prove  a version of Busemann's theorem for $p$-convex bodies. %If $p=1$, then it is the classical Busemann's theorem.
%\textcolor{red}{ Probably we should write a $k$-dimensional case, so that it would not be generalized by others? As it is done in [MP] and after just apply it to intersection bodies with $k=n-1$?}

\begin{theorem}\label{th:pbus}

Let $K$ be a symmetric $p$-convex body in $\R^n$,   $p \in (0,1]$, and $E$ a $(k-1)$-dimensional subspace of $\R^n$ for $1\le k\le n$. Then the map
$$
u \longmapsto \frac{|u|}{\big|K \cap {\rm span}(u,E)\big|_k},\quad u\in E^\perp
$$
defines the Minkowski functional of a $q$-convex body in $E^\bot$ with $q =\left[
\left(1/p-1\right)k + 1 \right]^{-1}$.
\end{theorem}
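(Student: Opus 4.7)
The plan is to reduce the statement to a 2-dimensional question about $E^\perp$, express the Minkowski functional of $L$ as the reciprocal of a one-dimensional integral of the marginal of $K$ onto $E^\perp$, and extract $q$-convexity from a Borell--Brascamp--Lieb-type integral inequality.

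Since $q$-convexity tests only pairs of vectors, I fix $u_1,u_2\in E^\perp$ and replace $K$ by $M := K\cap\operatorname{span}(u_1,u_2,E)$, which is still $p$-convex and symmetric, living in the $(k+1)$-dimensional subspace $\operatorname{span}(u_1,u_2,E)$. Since $M\cap\operatorname{span}(u,E)=K\cap\operatorname{span}(u,E)$ for every $u\in\operatorname{span}(u_1,u_2)$, I may assume the ambient space is $\R^{k+1}=E\oplus E^\perp$ with $\dim E=k-1$ and $\dim E^\perp=2$. Writing points as $(y,z)\in E\times E^\perp$ and letting $f_M(z):=\vol{\{y\in E:(y,z)\in M\}}_{k-1}$, Fubini gives $\vol{M\cap\operatorname{span}(u,E)}_k = |u|\int_{\R} f_M(tu)\,dt$, so the Minkowski functional is $\|u\|_L = \tilde V(u)^{-1}$ with $\tilde V(u) := \int_{\R} f_M(tu)\,dt$, homogeneous of degree $-1$. (The case $k=1$ is trivial: then $L=2K$ is automatically $p$-convex and $q=p$ as claimed.)

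For $k\ge 2$ the key pointwise inequality comes from the $p$-convexity of $M$: for $a,b\ge 0$ with $a^p+b^p=1$ one has $M_{az_1+bz_2}\supset aM_{z_1}+bM_{z_2}$ where $M_z:=\{y:(y,z)\in M\}$, and the classical Brunn--Minkowski inequality in $\R^{k-1}$ yields $h(az_1+bz_2)\ge ah(z_1)+bh(z_2)$ for $h:=f_M^{1/(k-1)}$. Applying this at $z_i=tu_i$ gives $h(t(au_1+bu_2))\ge ah(tu_1)+bh(tu_2)$ for every $t\in\R$. The aim is then to raise to the $(k-1)$ power, integrate in $t$, rescale $u_1\mapsto u_1/a$ and $u_2\mapsto u_2/b$ (using the degree $-1$ homogeneity of $\tilde V$), and optimize over $(a,b)$ with $a^p+b^p=1$, to produce the $q$-convexity statement $\tilde V(u_1+u_2)^{-q}\le \tilde V(u_1)^{-q}+\tilde V(u_2)^{-q}$ with $q=[(1/p-1)k+1]^{-1}$.

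The main obstacle is extracting the sharp exponent in this last step. A naive use of $(x+y)^{k-1}\ge x^{k-1}+y^{k-1}$ yields the integrated inequality $\tilde V(u_1+u_2)\ge a^k\tilde V(u_1)+b^k\tilde V(u_2)$, whose optimum over $a^p+b^p=1$ produces only $q\le p/(k-p)$; this happens to coincide with the theorem's exponent for $k\le 2$ but is strictly smaller for $k\ge 3$. To reach the sharp $q$ for general $k$ one must instead invoke the full Borell--Brascamp--Lieb calculus, so that the pointwise $p$-concavity of $h$ together with the $(k-1)$-th power integration shifts the concavity index by exactly $k(1/p-1)$, reproducing $1/q-1=k(1/p-1)$. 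Carrying out this BBL estimate (or, equivalently, a sharper integral bound on $(ah_1+bh_2)^{k-1}$ than the elementary one above) is the technical heart of the proof.
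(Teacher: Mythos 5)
There is a genuine gap, and it sits at the very first quantitative step, not only in the admitted unfinished ``BBL'' step. Your pointwise inequality $h\bigl(t(au_1+bu_2)\bigr)\ge a\,h(tu_1)+b\,h(tu_2)$ ``for every $t\in\R$'' is false: the supports of $t\mapsto f_M(tu_1)$ and $t\mapsto f_M(tu_2)$ are intervals of different lengths, and for $t$ where exactly one slice $M_{tu_i}$ is empty the right-hand side can be positive while the left-hand side vanishes (Brunn--Minkowski gives nothing when one of the sets is empty). This is not a removable technicality: if the same-$t$ coupling were valid for all $t$, then integrating with $(x+y)^{k-1}\ge x^{k-1}+y^{k-1}$, rescaling by homogeneity and letting $a\to 1^-$ would give $\tilde V(u_1+u_2)\ge \tilde V(u_1)$ for all $u_1,u_2$, which already fails for $K=B_2^n$ (there $\tilde V(u)=c/|u|$, and taking $u_2$ close to $u_1$ gives $c/2\ge c$). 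So even in the convex case $p=1$ your scheme cannot recover Busemann's theorem; the defect is the coupling of the two integrals by a common parameter $t$, not merely the loss in the elementary power inequality. On top of this, the step you yourself flag as ``the technical heart'' is left undone, and the only estimate you complete yields $q\le p/(k-p)$, strictly weaker than the claimed $q=\bigl[(1/p-1)k+1\bigr]^{-1}$ for $k\ge 3$.

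The paper's proof supplies exactly the missing idea: instead of using the same $t$ along both directions, it couples the two integrals by the measure-preserving reparametrization of Busemann--Milman--Pajor, defining $r_1(t),r_2(t)$ by equalizing the normalized cumulative volumes $\frac{1}{\rho(u_i)}\int_0^{r_i}|K\cap(ru_i+E)|\,dr=t$, then forming the $p$-harmonic combination $r=(r_1^{-p}+r_2^{-p})^{-1/p}$ with weights $\la_i=r_i^{-p}/(r_1^{-p}+r_2^{-p})$. Differentiating, using AM--GM on $dr/dt$, applying $p$-convexity to include $\la_1^{1/p}\bigl(K\cap(r_1u_1+E)\bigr)+\la_2^{1/p}\bigl(K\cap(r_2u_2+E)\bigr)$ in $K\cap(ru+E)$, and then Brunn--Minkowski, produces the sharp factor $\bigl(\la_1^{\la_1}\la_2^{\la_2}\bigr)^{(1/p-1)(k-1)+1/p}$ and hence the stated $q$. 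This transport argument automatically handles the differing supports (the parametrization by mass never leaves the body) and is what delivers the exponent; if you want a BBL-flavored route you would need a $p$-convex analogue of such inequalities (in the spirit of Theorem~\ref{th:log-concave} of the paper), which is itself proved by a comparable mechanism rather than by a same-parameter pointwise bound.
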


\begin{proof} We follow the general idea of the proof from \cite{MP}  (see also \cite[p.311]{Ga2}).
Let $u_1, u_2 \in  E^\perp \setminus \{0\}$ be nonparallel vectors. Denote $u=u_1+u_2$,   and
\begin{eqnarray*}
\rho (u_1) &=&\frac{ |K \cap \mbox{span}\{u_1,E\}|}{|u_1|} = \int_{-\infty}^{\infty}
\left|K \cap (r u_1+E)\right|dr, \\
\rho (u_2) &=&\frac{ |K \cap \mbox{span}\{u_2,E\}| }{|u_2|}= \int_{-\infty}^{\infty} \left|K
\cap (r u_2+E)\right|dr.
\end{eqnarray*}
Define the functions $r_1 = r_1(t)$ and $r_2 = r_2(t)$ by
\begin{eqnarray*}
t &=& \frac{1}{\rho (u_1)} \int_0^{r_1} \left|K \cap(r
u_1+E)\right|dr \\
&=& \frac{1}{\rho (u_2)} \int_0^{r_2} \left|K \cap(r
u_2+E)\right|dr ,\qquad t\in [0,1/2].
\end{eqnarray*}
Let $r = \left( r_1^{-p} + r_2^{-p} \right)^{-\frac{1}{p}}$, $\la_1
= \frac{r_1^{-p}}{r_1^{-p} + r_2^{-p}}$ and $\la_2 =
\frac{r_2^{-p}}{r_1^{-p} + r_2^{-p}}$. Then

\begin{eqnarray*}
\frac{dr}{dt} &=& -\frac{1}{p}\left(r_1^{-p} +
r_2^{-p}\right)^{-\frac{1}{p}-1} \left( -p\,
r_1^{-p-1}\frac{dr_1}{dt}
- p\, r_2^{-p-1}\frac{dr_2}{dt} \right) \\
&=& r \left[ \la_1 \left(\frac{1}{r_1}\frac{dr_1}{dt}\right) +
\la_2 \left(\frac{1}{r_2}\frac{dr_2}{dt}\right) \right] \\
&\ge& r \left(\frac{1}{r_1}\frac{dr_1}{dt}\right)^{\la_1}
\left(\frac{1}{r_2}\frac{dr_2}{dt}\right)^{\la_2} =
\left(\la_1^{1/p}\frac{dr_1}{dt}\right)^{\la_1}
\left(\la_2^{1/p}\frac{dr_2}{dt}\right)^{\la_2} \\
&=& \left(\la_1^{\la_1}\la_2^{\la_2}\right)^{\frac{1}{p}}
\left(\frac{\rho (u_1)}{|K \cap (r_1 u_1+E)|}\right)^{\la_1}
\left(\frac{\rho (u_2)}{|K \cap (r_2 u_2+E)|}\right)^{\la_2}.
\end{eqnarray*}
On the other hand, since $r\,u = (r_1^{-p}+r_2^{-p})^{-\frac{1}{p}}
(u_1+u_2) = \la_1^{\frac{1}{p}}r_1u_1+\la_2^{\frac{1}{p}}r_2u_2$, we
have
\begin{eqnarray*}
K \cap (r\,u+E) &\supset& \la_1^{\frac{1}{p}}\left(K
\cap (r_1\, u_1+E)\right) + \la_2^{\frac{1}{p}}\left(K \cap (r_2\,u_2+E)\right) \\
&=& \la_1\left(\la_1^{\frac{1}{p}-1}K \cap (r_1\, u_1+E)\right) +
\la_2\left(\la_2^{\frac{1}{p}-1}K \cap (r_2\,u_2+E)\right).
\end{eqnarray*}
Thus, by the Brunn-Minkowski  inequality (see,  for example,  \cite{Ga3}), we get
\begin{eqnarray*}
|K \cap (ru+E)| &\ge& \left|\la_1^{\frac{1}{p}-1}K \cap (r_1
u_1+E)\right|^{\la_1} \left|\la_2^{\frac{1}{p}-1}K \cap (r_2
u_2+E)\right|^{\la_2} \\
&=& \left(\la_1^{\la_1}\la_2^{\la_2}\right)^{(\frac{1}{p}-1)(k-1)}
\left|K \cap (r_1 u_1+E)\right|^{\la_1} \left|K \cap (r_2
u_2+E)\right|^{\la_2}.
\end{eqnarray*}
Finally, we have the following:
\begin{eqnarray*}
\rho (u_1+u_2) &=& \int_{-\infty}^{\infty} \left|K \cap (r
u+E)\right|dr = 2\int_0^{1/2} \left|K \cap (r
u+E)\right|\frac{dr}{dt}dt \\
&\ge& 2\int_0^{1/2}
\left(\la_1^{\la_1}\la_2^{\la_2}\right)^{(\frac{1}{p}-1)(k-1)+\frac{1}{p}}
\rho (u_1)^{\la_1}\rho (u_2)^{\la_2} dt \\
&\ge& 2\int_0^{1/2}
 \left[ \Big(\la_1
\left[\rho (u_1)\right]^q\Big)^{\la_1} \Big(\la_2
\left[\rho (u_2)\right]^q\Big)^{\la_2} \right]^{1/q} dt\\
&\ge& 2\int_0^{1/2} \left[ \frac{\la_1}{\la_1
\left[\rho (u_1)\right]^q} + \frac{\la_2}{\la_2
\left[\rho (u_2)\right]^q}
\right]^{-1/q} dt\\
&=& \left[ \left[\rho (u_1)\right]^{-q} +
\left[\rho (u_2)\right]^{-q} \right]^{-1/q}.
\end{eqnarray*}
Therefore $\rho(u)$ defines a $q$-convex body.

\end{proof}

As an immediate corollary of the previous theorem we obtain the following.

\begin{theorem}\label{th:q-conv}
Let $K$ be a symmetric $p$-convex body in $\R^n$ for $p \in (0,1]$. Then the
intersection body $IK$ of $K$ is $q$-convex for   $q = \left[
\left(1/p-1\right)(n-1) + 1 \right]^{-1}$.
\end{theorem}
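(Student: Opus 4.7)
The plan is to deduce this directly from Theorem \ref{th:pbus} applied with $k = n-1$, but with a mild twist: Theorem \ref{th:pbus} controls sections of $K$ by the $k$-dimensional subspace $\mathrm{span}(u,E)$, whereas the definition of $IK$ involves the $(n-1)$-dimensional hyperplane $u^\perp$. These hyperplanes are not the same, but they are related by a $\pi/2$-rotation inside a 2-plane. First I would reduce the problem to checking $q$-convexity on every $2$-dimensional section, which suffices because the inequality $\|x+y\|_{IK}^q \le \|x\|_{IK}^q + \|y\|_{IK}^q$ involves only the plane $F = \mathrm{span}(x,y)$ (the collinear case reduces to the scalar inequality $|a+b|^q \le |a|^q + |b|^q$, valid for $q \in (0,1]$).

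Fix a $2$-dimensional subspace $F \subset \R^n$ and let $E = F^\perp$, an $(n-2)$-dimensional subspace. By Theorem \ref{th:pbus} applied with $k = n-1$, the map
\begin{equation*}
g(v) = \frac{|v|}{|K \cap \mathrm{span}(v,E)|_{n-1}}, \qquad v \in F,
\end{equation*}
is the Minkowski functional of a $q$-convex body in $F$ with the required $q$. Let $\phi$ denote the rotation of $F$ by $\pi/2$. For any $u \in F \setminus \{0\}$, the vector $\phi(u)$ lies in $F$ and is orthogonal both to $u$ and to $E$; hence $\phi(u) \perp \mathrm{span}(u,E)$, and since $\dim \mathrm{span}(u,E) = n-1$, we conclude $\phi(u)^\perp = \mathrm{span}(u,E)$, or equivalently $u^\perp = \mathrm{span}(\phi(u),E)$. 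Because $\phi$ is a Euclidean isometry on $F$, we also have $|u| = |\phi(u)|$, so
\begin{equation*}
\|u\|_{IK} = \frac{|u|}{|K \cap u^\perp|_{n-1}} = \frac{|\phi(u)|}{|K \cap \mathrm{span}(\phi(u),E)|_{n-1}} = g(\phi(u)).
\end{equation*}

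Thus the restriction of $\|\cdot\|_{IK}$ to $F$ is the composition $g \circ \phi$; since $\phi$ is linear and $g$ is $q$-convex, $g \circ \phi$ is $q$-convex as well. Because $F$ was an arbitrary $2$-plane, $IK$ is $q$-convex with the stated $q$. The only genuine obstacle is the matching step just described: identifying the hyperplane $u^\perp$ that appears in the definition of $IK$ with the spanning hyperplane $\mathrm{span}(\phi(u),E)$ produced by Theorem \ref{th:pbus}, which is carried out by the $\pi/2$-rotation inside the test plane $F$. Everything else is an immediate consequence of that theorem.
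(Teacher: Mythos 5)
Your proposal is correct and follows essentially the same route as the paper: the paper's proof likewise works in the $2$-plane spanned by the two test directions, sets $E$ equal to its orthogonal complement, and applies Theorem \ref{th:pbus} with $k=n-1$ after identifying $u^\perp$ with the span of $E$ and the in-plane vector perpendicular to $u$. You simply make explicit the reduction to $2$-dimensional sections and the $\pi/2$-rotation that the paper leaves implicit.
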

\begin{proof}
Let $L=IK$ be the intersection body of $K$. Let $v_1,v_2\in \mbox{span}\{u_1,u_2\}$ be orthogonal to $u_1$ and $u_2$ correspondingly. Denote $E=\mbox{span}\{u_1,u_2\}^\perp$.
Then $$\rho_L(v_1) =  |K \cap \mbox{span}\{u_1,E\}| = \rho (u_1),$$
and $$\rho_L(v_2) =  |K \cap \mbox{span}\{u_2,E\}| = \rho (u_2).$$
Using the previous theorem with $k=n-1$, we see that $L$ is $q$-convex for   $q =  [
\left(1/p-1\right)(n-1) + 1 ]^{-1}$.
\end{proof}

\begin{remark}
Note that the previous theorem does not hold without the symmetry assumption. To see this, use the idea from \cite[Thm 8.1.8]{Ga2}, where it is shown that $IK$ is not necessarily convex if $K$ is not symmetric.
\end{remark}

A natural question is to see whether the value of $q$ in Theorem \ref{th:q-conv} is optimal. Unfortunately, we were unable to construct a body that gives exactly this value of $q$, but our next result shows that the bound is asymptotically correct.

\begin{theorem}

There exists a $p$-convex body $K\subset \mathbb R^n$ such that $IK$ is $q$-convex with $q\le \left[(1/p-1)(n-1)+1 +g_n(p)\right]^{-1}$, where $g_n(p)$ is  a function that satisfies

1) $g_n(p)\ge  - \log_2(n-1),$

2) $\displaystyle \lim_{p\to 1^-} g_n(p) =0 .$

\end{theorem}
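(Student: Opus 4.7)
My plan is to exhibit an explicit $p$-convex body $K$ whose intersection body saturates the bound in Theorem~\ref{th:q-conv} up to a controlled error, and to read off the function $g_n(p)$ from the resulting formula. The natural candidate is the ``$p$-bicone''
\[
K = \bigl\{x\in\R^n : (|x_n|/R)^p + (x_1^2+\cdots+x_{n-1}^2)^{p/2} \le 1\bigr\},
\]
a $p$-convex body (verified by the triangle inequality for $\ell_p^{2}$ combined with sub-additivity of $t\mapsto t^p$) which at $p=1$ is the usual convex double cone, and which carries full $O(n-1)$-symmetry in the first $n-1$ coordinates. This symmetry allows me to restrict the $q$-convexity test to the 2-plane $\mathrm{span}(e_1,e_n)$.

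Parametrizing the hyperplane $v^\perp$ for $v=v_1 e_1+v_n e_n$ by $x_n$ and $(x_2,\ldots,x_{n-1})$ (with Jacobian $|v|/|v_1|$) and rescaling by $\sigma=|x_n|/R$, I would obtain the closed-form expression
\[
|K\cap v^\perp| \;=\; \frac{2R\,\omega_{n-2}\,|v|\,I(\beta)}{|v_1|\,(1+\beta^p)^{(n-1)/p}},\qquad \beta=R\,\frac{|v_n|}{|v_1|},
\]
with $I(\beta)=\int_0^1 \bigl[(1+\beta^p-\tau^p)^{2/p}-\beta^2\tau^2\bigr]_+^{(n-2)/2}\,d\tau$. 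This gives the radial function of $IK$ on $\mathrm{span}(e_1,e_n)$ explicitly. Testing $q$-convexity on a pair $v_1,v_2$ of $\|\cdot\|_{IK}$-unit vectors in this plane, the inequality $\|v_1+v_2\|_{IK}^q \le \|v_1\|_{IK}^q+\|v_2\|_{IK}^q = 2$ becomes $q\le(\log 2)/\log h_n(p)$, where $h_n(p):=\|v_1+v_2\|_{IK}$ turns out to be an $R$-independent quantity (explicit after letting $R\to\infty$). Writing the result as $q\le[(1/p-1)(n-1)+1+g_n(p)]^{-1}$ then defines $g_n(p):=\log_2 h_n(p)-[(1/p-1)(n-1)+1]$.

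The remaining task is to verify the two properties of $g_n$. Property (2), $\lim_{p\to 1^-}g_n(p)=0$, reflects asymptotic tightness at the convex end: at $p=1$ the body $K$ is convex, so by Busemann's theorem $IK$ is convex, and the extremal $q$ must be $1=q_0(1)$; choosing the test pair $(v_1(p),v_2(p))$ so that its angle closes up at the right rate as $p\to 1^-$ drives $h_n(p)\to 2$ and hence $g_n(p)\to 0$. Property (1), $g_n(p)\ge -\log_2(n-1)$, amounts to the lower bound $h_n(p)\ge 2^{(1/p-1)(n-1)+1}/(n-1)$, which I would obtain from an upper estimate of $I(\beta_*)$ that crucially retains the cancellation $(1+\beta^p-\tau^p)^{2/p}-\beta^2\tau^2$ so that the resulting Beta-type integral carries a $1/(n-1)$ factor. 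The main obstacle is twofold: first, the optimal $p$-dependent tuning of the test pair needed in (2), since for the strictly convex $IK$ at $p=1$ no fixed pair achieves $\|v_1+v_2\|_{IK}=2$; and second, the delicate bookkeeping of the integral estimate in (1) for small $p$ (where $\beta_*=\omega_{n-1}/C_1$ can be large). Both reduce to one-variable calculus with explicit special functions (Beta, Gamma), given the closed form of $I(\beta)$.
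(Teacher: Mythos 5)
Your overall strategy --- take one explicit $p$-convex body, test the quasi-triangle inequality for $IK$ on a single well-chosen pair of directions in a $2$-plane, and read off $g_n(p)$ --- is exactly the strategy of the paper. But your choice of body defeats it. The paper's body is the $p$-convex hull of two parallel $(n-1)$-cubes at heights $x_n=\pm 1$: its waist $K\cap e_n^\perp$ is the cube shrunk by the factor $2^{1-1/p}$ in \emph{each} of the $n-1$ transverse directions, so $\rho_{IK}(e_n)=(2^{2-1/p})^{n-1}$ is exponentially smaller (in $n$) than $\rho_{IK}((e_1\pm e_n)/\sqrt2)$, and one test triple yields $g_n(p)=\log_2\frac{(n-2)(2^{1-1/p})^{n-1}+1}{n-1}$. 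Your rotationally invariant $p$-bicone has no such waist: its central section $K\cap e_n^\perp$ is a full Euclidean ball, the \emph{largest} cross-section of the body, and the resulting non-convexity of $IK$ is only polynomial in $n$, not of order $2^{(1/p-1)(n-1)}$. Concretely, with $u=\cos\theta\,e_1+\sin\theta\,e_n$ and $c=\cot\theta/R$ one has exactly $|K\cap u^\perp|=\frac{1}{\sin\theta}\,|S_c|$, where $S_c=\{x'\in\R^{n-1}:(c|x_1|)^p+|x'|_2^p\le 1\}$; since $K$ lies in the cylinder $\{|x'|_2\le 1\}$, also $|K\cap u^\perp|\le\omega_{n-1}/\sin\theta$. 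Restricting to $|x_1|\le n^{-1/p}$ gives $|S_c|\ge c_p\,\omega_{n-2}\,n^{-1/p}\min(1,1/c)$, and comparing with $\omega_{n-1}\approx\omega_{n-2}\sqrt{2\pi/n}$ and with $\lim_{c\to\infty}c|S_c|\approx 2\omega_{n-2}\int_0^1(1-u^p)^{(n-2)/p}du\approx c_p'\,\omega_{n-2}n^{-1/p}$, one finds that the planar restriction of $IK$ to ${\rm span}(e_1,e_n)$ contains a $c_p\,n^{1/2-1/p}$-scaled copy of its own convex hull, uniformly in $R$. Hence for any $\|\cdot\|_{IK}$-unit vectors $v_1,v_2$ in that plane, $\|v_1+v_2\|_{IK}\le C_p\,n^{1/p-1/2}$, so your quantity $h_n(p)$ is at most polynomial in $n$. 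Property (1), which is exactly the claim $h_n(p)\ge 2^{(1/p-1)(n-1)+1}/(n-1)$, is therefore false for this body once $p<1$ is fixed and $n$ is large; no tuning of the test pair (nor of $R$) can repair this, because by rotational symmetry $IK$ is a body of revolution and the axial plane is the only place non-convexity can live.

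Independently of the choice of body, the write-up is a plan rather than a proof: the two steps you yourself flag as ``the main obstacle'' --- the lower bound on $h_n(p)$ needed for (1) and the $p$-dependent tuning driving $h_n(p)\to 2$ for (2) --- are precisely the content of the theorem, and they are only asserted (``I would obtain''), not carried out. If you want to salvage the approach, replace the bicone by a body that is genuinely pinched at the equator relative to its extremities --- e.g.\ the $p$-convex hull of two parallel $(n-1)$-dimensional cubes (as in the paper) or of two parallel $(n-1)$-balls; then $\rho_{IK}(e_n)$ really does carry the factor $2^{(1-1/p)(n-1)}$, the tilted sections can be bounded below by a projection argument, and both properties of $g_n$ follow from an elementary one-dimensional integral estimate.
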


\begin{proof}
%It is not difficult to construct a body that would satisfy property (1). It is enough to consider the  $p$-convex combination of two copies of the $(n-1)$-cube sitting in two parallel planes in $\mathbb R^n$.
%However this construction does not give properties (2) and (3). Therefore, we give a slightly different example.

Consider the following two $(n-1)$-dimensional cubes in $\mathbb R^{n}$:
$$C_1=\{|x_1|\le 1, ...,|x_{n-1}|\le 1, x_{n}=1\} \,\,\,\,\,\,  \mbox{  and  } \,\,\,\,\,\, C_{-1}=\{|x_1|\le 1, ...,|x_{n-1}|\le 1, x_{n}=-1\}.$$
For a fixed $0<p<1$, let us define  a set $K\subset \mathbb R^{n}$ as follows:
$$
K = \{z\in \mathbb R^n :  z= t^{1/p}x+ (1-t)^{1/p}y, \mbox{ for some }  x\in C_{1}, y\in C_{-1},  0\le t\le 1\}.
$$
We claim that $K$ is $p$-convex. To show this let us consider two arbitrary points $z_1, z_2\in K$,
$$z_1= t_1^{1/p}x_1+ (1-t_1)^{1/p}y_1,\qquad z_2= t_2^{1/p}x_2+ (1-t_2)^{1/p}y_2,$$ where $x_1, x_2\in C_{1}$, $y_1, y_2\in C_{-1}$, and $t_1, t_2 \in [0, 1]$.

We need to show that for all $s\in (0,1)$ the point  $w= s^{1/p}z_1+ (1-s)^{1/p}z_2$ belongs to $K$.

%Indeed,
%$$w= s^{1/p}(t_1^{1/p}x_1+ (1-t_1)^{1/p}y_1)+ (1-s)^{1/p}(t_2^{1/p}x_2+ (1-t_2)^{1/p}y_2)$$
%$$= (s^{1/p}t_1^{1/p} + (1-s)^{1/p}t_2^{1/p})\frac{s^{1/p}t_1^{1/p}x_1+ (1-s)^{1/p}t_2^{1/p}x_2}{s^{1/p}t_1^{1/p} + (1-s)^{1/p}t_2^{1/p}}$$
%$$+ (s^{1/p} (1-t_1)^{1/p}+  (1-s)^{1/p}(1-t_2)^{1/p})\frac{s^{1/p} (1-t_1)^{1/p}y_1+  (1-s)^{1/p}(1-t_2)^{1/p}y_2}{s^{1/p} (1-t_1)^{1/p}+  (1-s)^{1/p}(1-t_2)^{1/p}}$$

Assume first that $t_1$ and $t_2$ are  neither both   equal to   zero  nor both equal to one.
Since $C_1$ and $C_{-1}$ are convex sets, it follows that the points
$${\bar x} = \frac{s^{1/p}t_1^{1/p}x_1+ (1-s)^{1/p}t_2^{1/p}x_2}{s^{1/p}t_1^{1/p} + (1-s)^{1/p}t_2^{1/p}}
\,\,\,\,\,\,\mbox{  and   }\,\,\,\,\,\,
{\bar y} =\frac{s^{1/p} (1-t_1)^{1/p}y_1+  (1-s)^{1/p}(1-t_2)^{1/p}y_2}{s^{1/p} (1-t_1)^{1/p}+  (1-s)^{1/p}(1-t_2)^{1/p}}$$
belong to $C_1$ and $C_{-1}$ correspondingly. Then $w=  \alpha {\bar x} + \beta {\bar y}$,  where $$\alpha = s^{1/p}t_1^{1/p} + (1-s)^{1/p}t_2^{1/p} \,\,\,\,\,\,  \mbox {   and }\,\,\,\,\,\,   \beta =s^{1/p} (1-t_1)^{1/p}+  (1-s)^{1/p}(1-t_2)^{1/p}.
$$
Note that $\alpha^p +\beta^p \le s t_1  + (1-s) t_2 + s  (1-t_1) +  (1-s) (1-t_2) = 1.$
Therefore, there exists $\mu\ge 0$ such that $(\alpha+\mu)^p+(\beta+\mu)^p =1$ and
 $$w=  (\alpha+\mu) {\bar x} + (\beta {\bar y}+\mu (-{\bar x})) = (\alpha+\mu) {\bar x} + (\beta+\mu)\frac{\beta {\bar y}+\mu (-{\bar x})}{\beta +\mu}.$$
 Since ${\bar y}\in C_{-1}$ and $-{\bar x}\in C_{-1}$, it follows that $${\widetilde y} = \frac{\beta {\bar y}+\mu (-{\bar x})}{\beta +\mu}\in C_{-1}.$$

% Thus $$w = (\alpha+\mu){\bar x} + (\beta+\mu){\widetilde y}.$$
Therefore $w$ is a $p$-convex combination
 of ${\bar x}\in C_{1}$ and ${\widetilde y}\in C_{-1}$.

 If $t_1$ and $t_2$ are either both zero or one, then either $\alpha=0$ or $\beta=0$. Without loss of generality let us say $\alpha=0$, then $w = \beta {\bar y}.$ Now choose $\bar x\in C_1$ arbitrarily, and apply the considerations above to the point $w=  0 {\bar x} + \beta {\bar y}.$
 The claim follows.

%From the definition of $K$ it follows   that $K\cap \{e_n^\perp + t e_n\}$ is an $(n-1)$-dimensional cube of side length $2f(t)$, where the function $f$ can be described as follows. If we connect the points $(-1,1)$ and $(1,1)$ in the $u_1u_2$-plane by the $p$-arc given by   $(u_2-u_1)^p+ (u_2+u_1)^p =2^p$, then $u_2=f(u_1)$ is the solution of this equation.

Note that $K$ can be written as
\begin{eqnarray} \label{lab:K}
K &=& \left\{ r^\frac1p x+ (1-r)^\frac1p y : x\in C_{1}, y\in C_{-1},  0\le r\le 1 \right\}  \nonumber \\
&=& \left\{ r^\frac1p v+ (1-r)^\frac1p w + \[r^\frac1p -(1-r)^\frac1p \]e_n: v, w\in B_\infty^{n-1},  0\le r\le 1 \right\}  \nonumber \\
&=& \left\{ \[r^\frac1p +(1-r)^\frac1p \] z + \[r^\frac1p -(1-r)^\frac1p \]e_n : z\in B_\infty^{n-1}, 0\le r\le 1\right\} \nonumber \\
&=& \Big\{ f(t) z + te_n :  z \in B_\infty^{n-1},   -1\le t\le 1\Big\} ,
\end{eqnarray}
where  $B^{n-1}_\infty=[-1,1]^{n-1}\subset \R^{n-1}$ and $f$ is a function on $[-1,1]$ defined as the solution $s=f(t)$ of $$\(\frac{s+t}{2}\)^p + \(\frac{s-t}{2}\)^p =1, \quad s\ge|t|,-1\le t\le 1.$$
%By the definition of $K$  we have    $$K\cap e_n^\perp = \left\{\frac{1}{2^{1/p}}(x+y): x\in C_{1}, y\in C_{-1}\right\}  =\left\{|x_1|\le \frac{2}{2^{1/p}}, ...,|x_{n-1}|\le \frac{2}{2^{1/p}}, x_{n}=0\right\} .$$
Let $L=IK$ be the intersection body of $K$. Then
$$\rho_L(e_n)= |K\cap e_n^\perp| = \left(2f(0)\right)^{n-1}=\left(\frac{4}{2^{1/p}}\right)^{n-1} .$$

In order to compute the volume of the central section of $K$ orthogonal to $(e_1+e_n)/\sqrt{2}$, use  (\ref{lab:K}) to notice that its projection onto $x_1=0$ coincides with $K\cap e_1^\perp$. Therefore
$$\rho_L((e_1+e_n)/\sqrt{2} )  = \sqrt{2}\rho_L( e_1 )=2 \sqrt{2}\int_0^1 \left[2f(t)\right]^{n-2} dt .$$

Let  $L=IK$ be $q$-convex. In order to estimate $q$, we will  use the inequality
$$\left\|\sqrt{2} e_n\right\|_L^q\le \left\|\frac{e_n+e_1}{\sqrt{2}}\right\|_L^q+ \left\|\frac{e_n-e_1}{\sqrt{2}}\right\|_L^q = 2 \left\|\frac{e_n+e_1}{\sqrt{2}}\right\|_L^q,$$ that is
$$\frac{\sqrt{2}}{\rho_L(e_n)} \le \frac{2^{1/q}}{\rho_L((e_1+e_n)/\sqrt{2} )} .$$
Thus, we have %$$ \frac{\left(\frac{4}{2^{1/p}}\right)^{n-1}}{2^{n-1} \sqrt{2}\int_0^1 f(t)^{n-2} dt }\ge \frac{2 \cdot 2^{-1/q}}{\sqrt{2}},$$
$$2^{1/q}\ge \left(\frac{2^{1/p}}{2}\right)^{n-1} 2  { \int_0^1 f(t)^{n-2} dt }  . $$
We now  estimate the latter integral.
From the definition of $f$ it follows that $f(t)\ge \frac{2}{2^{1/p}}$ and $f(t)\ge  t $ for $t\in [0,1]$. Taking the maximum of these two functions, we get   $$\displaystyle f(t) \ge \left\{\begin{array}{ll}\frac{2}{2^{1/p}},& \mbox{ for } 0\le t \le \frac{2}{2^{1/p}},\\
t,& \mbox{ for }   \frac{2}{2^{1/p}} \le t\le 1. \end{array} \right.$$
Therefore, \begin{align}\label{intf} \int_0^1 f(t)^{n-2} dt&\ge\int_0^{\frac{2}{2^{1/p}}}\left(\frac{2}{2^{1/p}}\right)^{n-2} dt +  \int_{\frac{2}{2^{1/p}}}^1 t^{n-2} dt\\\notag &=\left(\frac{2}{2^{1/p}}\right)^{n-1}+\frac{1}{n-1}\left(1 - \left(\frac{2}{2^{1/p}}\right)^{n-1} \right).\end{align}
 Hence,
 $$2^{1/q}\ge \ 2    \left(1+\frac{1}{n-1}\left(\left(\frac{2^{1/p}}{2}\right)^{n-1} - 1\right)  \right) , $$
which implies $$q\le \left[\left(\frac1p-1\right)(n-1)+1+\log_2\frac{(n-2)\left(\frac{2}{2^{1/p}}\right)^{n-1}+1}{n-1}\right]^{-1}.$$
Denoting $$g_n(p)=\log_2\frac{(n-2)\left(\frac{2}{2^{1/p}}\right)^{n-1}+1}{n-1},$$
we get the statement of the theorem.

%one can see that

%1) $g_n(p)\ge -\log_2 (n-1)$,

%2)  $\lim_{p\to 1^-} g_n(p) = 0 .$

\end{proof}

%\textcolor {red}{ Here some ideas about Banach Mazur distance from $K$ to $B_2^n$ and $IK$ to $B_2^n$}

We will use the above example to show that in general  the intersection body operator does not improve the Banach-Mazur distance to the Euclidean ball $B_2^n$.

\begin{theorem}
Let $p\in (0,1)$ and let $c$ be any constant satisfying $1<c< {2^{1/p-1}}$.  Then for all large enough $n$, there exists a $p$-convex body $K\subset \mathbb R^n$ such that
$$c^n d_{BM}(K, B_2^n)  <  d_{BM} (IK, B_2^n).$$

\end{theorem}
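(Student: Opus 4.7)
I would reuse the $p$-convex body $K$ constructed in the proof of the previous theorem, and combine a simple $O(\sqrt{n})$ upper bound for $d_{BM}(K,B_2^n)$ with an exponentially large lower bound for $d_{BM}(IK,B_2^n)$ coming from the extreme anisotropy of $\rho_{IK}$ in the two-dimensional plane $V=\operatorname{span}(e_1,e_n)$.

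The upper bound is immediate. From the representation $K=\{f(t)z+te_n:z\in B_\infty^{n-1},\,|t|\le 1\}$ and the facts (used in the previous proof) that $f(0)=2^{1-1/p}$ is the minimum of $f$ on $[-1,1]$ while $f(\pm 1)=1$ is its maximum, one sees at once that $2^{1-1/p}B_\infty^n\subset K\subset B_\infty^n$. Combined with $B_2^n\subset B_\infty^n\subset\sqrt n\,B_2^n$, this yields $d_{BM}(K,B_2^n)\le 2^{1/p-1}\sqrt n$.

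For the lower bound, set $L_0=IK\cap V$; since central sections of ellipsoids are ellipsoids, $d_{BM}(IK,B_2^n)\ge d_{BM}(L_0,B_V)$. Write $R_1=\rho_{IK}(e_1)$ and $R_n=\rho_{IK}(e_n)$; the previous proof records $R_n=2^{(2-1/p)(n-1)}$, $\rho_{IK}((e_1\pm e_n)/\sqrt 2)=\sqrt 2\,R_1$ (the $\pm$ version follows from the symmetries of $IK$), and, via \eqref{intf}, $R_1=2^{n-1}\int_0^1 f(t)^{n-2}\,dt\ge 2^{n-1}/(n-1)$. To estimate $d_{BM}(L_0,B_V)$ from below I would apply the parallelogram identity for ellipses: for any centrally symmetric ellipse $E'\subset V$ and any orthonormal basis $\{u_1,u_2\}$ of $V$,
\[
\rho_{E'}(u_1)^{-2}+\rho_{E'}(u_2)^{-2}=\rho_{E'}\bigl(\tfrac{u_1+u_2}{\sqrt 2}\bigr)^{-2}+\rho_{E'}\bigl(\tfrac{u_1-u_2}{\sqrt 2}\bigr)^{-2}.
\]
Taking $u_1=e_1$, $u_2=e_n$, any inclusion $\alpha E'\subset L_0\subset\beta E'$ gives $\rho_{E'}(e_i)^{-2}\ge\alpha^2\rho_{L_0}(e_i)^{-2}$ at the axis directions and $\rho_{E'}((e_1\pm e_n)/\sqrt 2)^{-2}\le\beta^2/(2R_1^2)$ at the diagonals, so the identity forces $\alpha^2(R_1^{-2}+R_n^{-2})\le\beta^2/R_1^2$; that is, $\beta/\alpha\ge\sqrt{1+(R_1/R_n)^2}\ge R_1/R_n\ge 2^{(1/p-1)(n-1)}/(n-1)$.

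Combining the two estimates yields
\[
\frac{d_{BM}(IK,B_2^n)}{d_{BM}(K,B_2^n)}\ge\frac{2^{(1/p-1)(n-2)}}{(n-1)\sqrt n};
\]
since $\log c<(1/p-1)\log 2$ by hypothesis, this ratio exceeds $c^n$ for all sufficiently large $n$, proving the theorem. The only nonroutine point is the two-dimensional Banach-Mazur lower bound, which the parallelogram identity reduces to a short computation; everything else is bookkeeping with quantities already produced in the proof of the previous theorem.
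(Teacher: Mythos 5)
Your proof is correct, and it reaches the same quantitative bounds as the paper while executing the key lower bound by a different mechanism. The upper bound $d_{BM}(K,B_2^n)\le 2^{1/p-1}\sqrt n$ coincides with the paper's (the paper gets $K\supset 2^{1-1/p}B_\infty^n$ by writing $a=\lambda x+(1-\lambda)y$ and estimating $\|a\|_K^p\le \lambda^p+(1-\lambda)^p\le 2^{1-p}$; your route via $2^{1-1/p}=f(0)\le f\le 1$ is equally valid and easy to check from the defining equation of $f$). For the lower bound the paper does not pass to a section: from $E\subset IK\subset dE$ it deduces $\frac1d\conv(IK)\subset E\subset IK$, hence $d\ge \rho_{\conv(IK)}(e_n)/\rho_{IK}(e_n)$, and then convexity of $\conv(IK)$ applied to the two boundary points in the diagonal directions gives $\rho_{\conv(IK)}(e_n)\ge \frac{1}{\sqrt2}\,\rho_{IK}\bigl((e_n+e_1)/\sqrt2\bigr)=R_1$, i.e. exactly your bound $d_{BM}(IK,B_2^n)\ge R_1/R_n\ge 2^{(1/p-1)(n-1)}/(n-1)$. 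You instead intersect with the plane $\operatorname{span}(e_1,e_n)$ — legitimate, since the paper's $d_{BM}$ uses linear maps only, so a sandwiching ellipsoid sections to a sandwiching centered ellipse — and apply the parallelogram law for the quadratic form of that ellipse; this is a correct, self-contained substitute for the convex-hull step and even yields the marginally stronger factor $\sqrt{1+(R_1/R_n)^2}$. The inputs you import from the previous proof ($R_n=(2f(0))^{n-1}=2^{(2-1/p)(n-1)}$, $\rho_{IK}\bigl((e_1+e_n)/\sqrt2\bigr)=\sqrt2\,R_1$ with $R_1=2^{n-1}\int_0^1 f(t)^{n-2}dt\ge 2^{n-1}/(n-1)$, and the $e_1-e_n$ value via the reflection symmetry $x_1\mapsto -x_1$ of $K$) are indeed recorded or immediate there, and the final comparison with $c^n$ is routine. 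In terms of trade-offs: the paper's argument stays in $\R^n$ and needs only the elementary fact that convexification can only shrink the sandwiching ratio, while your version isolates the obstruction in a two-dimensional identity and makes explicit that only the three radial values of $IK$ on $\operatorname{span}(e_1,e_n)$ are responsible for the exponential distance.
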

\begin{proof}  We will consider $K$ from the previous theorem.  One can see that $K \subset B_\infty ^n \subset \sqrt{n}B_2^n$. Also note that for any $a \in B_\infty^n$, there exist $x \in C_1$,  $y\in C_{-1}$ and $\lambda \in [0,1]$ such that $a=\lambda x + (1-\lambda)y$. Then we have
$
\|a\|^p_K \le \lambda^p+(1-\lambda)^p\le 2^{1-p}.
$
Therefore, $K \supset 2^{\frac{p-1}{p}}B_\infty^n \supset  2^{\frac{p-1}{p}}B_2^n$, and thus
\begin{equation}\label{KB}
d_{BM}(K, B_2^n) \le  2^{\frac{1-p}{p}} \sqrt{n}.
\end{equation}
Next we would like to provide a lower bound for  $d_{BM}(IK, B_2^n)$.  Let
$E$ be an ellipsoid such that
$
E \subset IK \subset d E,
$ for some $d$.
Then
$$
IK \subset \conv(IK) \subset d E  \,\,\,\,\,\,\  \mbox{  and   }\,\,\,\,\,\,\, \frac{1}{d}\conv(IK) \subset E \subset IK.
$$
Therefore,  $1/d \le 1/r$, where $r=\min \{ t:  conv(IK) \subset  t IK\}$.   Thus, $$d_{BM}(IK, B_2^n) \ge r=\max\left\{\frac{\rho_{conv(IK)}(\theta)}{\rho_{IK}(\theta)},  \theta \in S^{n-1}\right\} \ge  \frac{\rho_{conv(IK)}(e_n)}{\rho_{IK}(e_n)}.$$
The  convexity of $conv(IK)$ gives
$$
\rho_{conv(IK)}(e_n) \ge  \left\| \frac{1}{2} \left  (\rho_{IK}\left(\frac{e_n+e_1}{\sqrt{2}}\right) \frac{e_n+e_1}{\sqrt{2}}+ \rho_{IK}\left(\frac{e_n-e_1}{\sqrt{2}}\right)\frac{e_n-e_1}{\sqrt{2}}\right)\right\|_2$$
$$
= \frac{1}{\sqrt{2}} \rho_{IK}\left(\frac{e_n+e_1}{\sqrt{2}}\right) .
$$
Combining  the above inequalities   with  inequality (\ref{intf}) from the previous theorem, we get
$$
d_{BM}(IK, B_2^n) \ge \frac{\rho_{IK}(\frac{e_n+e_1}{\sqrt{2}})} {\sqrt{2}\rho_{IK}(e_n)} \ge \left(\frac{2^{1/p}}{2}\right)^{n-1} \frac{1}{n-1}.    $$  Comparing this with (\ref{KB}) we get the statement of the theorem.
\end{proof}

\section{Generalization to log-concave measures}

A measure $\mu$ on $\mathbb R^n$ is called log-concave if for any measurable $A,B \subset \mathbb R^n$
and $0 < \lambda < 1$, we have
$$
\mu(\lambda A + (1-\lambda)B) \ge  \mu(A)^\lambda \mu(B)^{(1-\lambda)}
$$
whenever $\lambda A + (1-\lambda)B$ is measurable.

Borell \cite{Bor} has shown that a measure $\mu$ on $\mathbb R^n$ whose support is not contained
in any affine hyperplane is a log-concave measure if and only if it is
absolutely continuous with respect to the Lebesgue measure, and its
density is a log-concave function.

To extend Busemann's theorem to log-concave measures on $\R^n$, we need the following theorem of Ball \cite{Ba1}, \cite{Ba2}.

\begin{theorem} \label{th:ball}
Let $f:\R^n\rightarrow [0,\infty)$ be an even log-concave function satisfying $0<\int_{\mathbb R^n} f <\infty$ and let $k\ge 1$. Then the map
$$
x \longmapsto \[\int_0^\infty f(rx)r^{k-1}dr\]^{-\frac 1k}
$$
defines a norm on $\R^n$.
\end{theorem}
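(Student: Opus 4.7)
The plan is to verify the three defining properties of a norm for $N(x) := F(x)^{-1/k}$, where $F(x) := \int_0^\infty f(rx) r^{k-1} dr$. Positive homogeneity $N(\la x) = \la N(x)$ for $\la > 0$ follows from the substitution $s = \la r$, which gives $F(\la x) = \la^{-k} F(x)$; evenness of $f$ then gives $N(-x) = N(x)$. Non-degeneracy $N(x) \in (0,\infty)$ for $x\neq 0$ rests on log-concavity combined with $0<\int f<\infty$: log-concavity and evenness force $f$ to be strictly positive on a symmetric convex neighborhood of the origin (so $F(x)>0$), while $\int f<\infty$ together with log-concavity produces exponential decay of $f$ along every ray, which dominates the polynomial weight $r^{k-1}$ and gives $F(x)<\infty$.

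The heart of the proof is the triangle inequality $N(u_1+u_2)\le N(u_1)+N(u_2)$. I would follow the blueprint of Theorem \ref{th:pbus} specialized to $p=1$, replacing the section volumes $|K\cap (ru_i+E)|$ by the radial density $f(ru_i)r^{k-1}$ and the Brunn--Minkowski step by log-concavity of $f$. For $u_1,u_2\in\R^n\setminus\{0\}$, define $r_i(t)$ on $t\in[0,1]$ implicitly by $tF(u_i)=\int_0^{r_i(t)}f(su_i)s^{k-1}ds$, so that $r_i$ is a bijection $[0,1]\to[0,\infty]$ with $r_i'(t)=F(u_i)/[f(r_i u_i)r_i^{k-1}]$. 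Set $\la_1(t)=r_2/(r_1+r_2)$, $\la_2(t)=r_1/(r_1+r_2)$, and $r(t)=r_1 r_2/(r_1+r_2)$. The crucial identity is $r=\la_1 r_1=\la_2 r_2$, from which
\begin{equation*}
r(u_1+u_2)=\la_1(r_1 u_1)+\la_2(r_2 u_2)
\end{equation*}
becomes an honest convex combination of $r_1 u_1$ and $r_2 u_2$, even though $u_1+u_2$ is their sum rather than a convex combination.

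Three parallel estimates then lower-bound the integrand $f(r(u_1+u_2))\,r^{k-1}\,r'(t)$. Log-concavity of $f$ gives $f(r(u_1+u_2))\ge f(r_1 u_1)^{\la_1}f(r_2 u_2)^{\la_2}$. The identity $r=\la_i r_i$ gives $r^{k-1}=(\la_1^{\la_1}\la_2^{\la_2})^{k-1}\,r_1^{(k-1)\la_1}r_2^{(k-1)\la_2}$. Differentiating $1/r=1/r_1+1/r_2$ produces $r'(t)=\la_1^2 r_1'+\la_2^2 r_2'$, and weighted AM--GM then gives $r'(t)\ge\la_1^{\la_1}\la_2^{\la_2}\,(r_1')^{\la_1}(r_2')^{\la_2}$. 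Multiplying the three bounds and using $f(r_i u_i)r_i^{k-1}r_i'(t)=F(u_i)$, we obtain
\begin{equation*}
f(r(u_1+u_2))\,r^{k-1}\,r'(t)\ge(\la_1^{\la_1}\la_2^{\la_2})^k\,F(u_1)^{\la_1}F(u_2)^{\la_2}.
\end{equation*}
A second weighted AM--GM, $a+b\ge(a/\la_1)^{\la_1}(b/\la_2)^{\la_2}$ applied with $a=F(u_1)^{-1/k}$ and $b=F(u_2)^{-1/k}$, rearranges to show $(\la_1^{\la_1}\la_2^{\la_2})^k F(u_1)^{\la_1}F(u_2)^{\la_2}\ge[F(u_1)^{-1/k}+F(u_2)^{-1/k}]^{-k}$. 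Integrating over $t\in[0,1]$ yields $F(u_1+u_2)\ge[F(u_1)^{-1/k}+F(u_2)^{-1/k}]^{-k}$, which is exactly $N(u_1+u_2)\le N(u_1)+N(u_2)$.

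The main obstacle is the careful bookkeeping of the $\la_1^{\la_1}\la_2^{\la_2}$ factors across the three multiplicative bounds. The particular choice $r=r_1 r_2/(r_1+r_2)$ is dictated by the requirement that $r(u_1+u_2)$ be a convex combination of $r_1 u_1$ and $r_2 u_2$ with weights $\la_1,\la_2$; any other mean would break the alignment between the log-concavity step and the identity for $r^{k-1}$, preventing the final weighted AM--GM from recovering the bound $[F(u_1)^{-1/k}+F(u_2)^{-1/k}]^{-k}$.
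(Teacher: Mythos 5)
Your proposal is correct: the homogeneity and non-degeneracy arguments are sound (positivity of $f$ near the origin and exponential decay of an integrable log-concave function give $0<F(x)<\infty$ for $x\neq 0$), and the bookkeeping in the triangle inequality — the identity $r=\la_1 r_1=\la_2 r_2$, the three lower bounds, and the two weighted AM--GM steps producing the factor $(\la_1^{\la_1}\la_2^{\la_2})^k$ — all checks out. Note that the paper does not actually prove Theorem \ref{th:ball}; it quotes it from Ball \cite{Ba1}, \cite{Ba2}. Your argument is exactly the Busemann-type scheme the paper uses for Theorem \ref{th:pbus}, specialized to $p=1$, with the section volumes $|K\cap(ru_i+E)|$ replaced by the radial densities $f(ru_i)r^{k-1}$ and the Brunn--Minkowski step replaced by log-concavity, which is essentially Ball's original proof.
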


An immediate consequence of Ball's theorem is a generalization of the classical Busemann theorem to log-concave measures on $\R^n$.

 Let $\mu$ be a measure on $\R^n$, absolutely continuous  with respect to the Lebesgue measure $m$, and $f$   its density function.
 If $f$ is locally integrable on $k$-dimensional affine subspaces of $\R^n$, then we denote by $\mu_k = f m_k$ the restriction of $\mu$ to $k$-dimensional subspaces, where $m_k$ is the $k$-dimensional Lebesgue measure.

Define the {\em intersection body} $I_\mu K$ of a star body $K$ {\em with respect to $\mu$} by
$$
\rho_{I_\mu K}(u) = \mu_{n-1}(K\cap u^\bot),\quad u \in S^{n-1}.
$$

Let $\mu$ be a symmetric  log-concave measure on $\R^n$ and $K$ a symmetric convex body in $\R^n$.
Let $f$ be the density of the measure $\mu$. If we apply Theorem \ref{th:ball} to the log-concave function $1_K f$, we get a symmetric convex body $L$ whose Minkowski functional is given by $$\|x\|_L = \[(n-1)\int_0^\infty (1_K f)(rx)r^{n-2}dr\]^{-\frac{1}{n-1}}.$$ Then for every $u \in S^{n-1}$,
\begin{eqnarray*}
\mu_{n-1}(K\cap u^\perp) &=& \int_{S^{n-1}\cap u^\perp}\int_0^\infty (1_K f)(r\theta)r^{n-2} dr d\theta \\
&=& \frac{1}{n-1}  \int_{S^{n-1}\cap u^\perp} \|\theta\|_L^{-n+1} d\theta = |L\cap u^\perp|.
\end{eqnarray*}
Using   Theorem \ref{th:Busemann} for the convex body $L$,  one immediately obtains the following version of Busemann's theorem for log-concave measures.

\begin{theorem}
Let $\mu$ be a symmetric  log-concave measure on $\R^n$ and $K$ a symmetric convex body in $\R^n$. Then the intersection body $I_\mu K$ is convex.
\end{theorem}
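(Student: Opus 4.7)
The plan is to reduce the problem to the classical Busemann theorem (Theorem \ref{th:Busemann}) by producing a symmetric convex body $L$ whose Lebesgue sections reproduce the $\mu$-sections of $K$, that is, $|L\cap u^\perp|=\mu_{n-1}(K\cap u^\perp)$ for every $u\in S^{n-1}$; then $I_\mu K=IL$ is convex by Busemann applied to $L$. The machine that produces $L$ from data like $K$ and $\mu$ is Ball's theorem (Theorem \ref{th:ball}), so the strategy is to identify the right even log-concave input function and feed it to Ball's theorem with $k=n-1$.

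First I would let $f$ denote the density of $\mu$ with respect to Lebesgue measure; by Borell's theorem $f$ is log-concave, and the symmetry of $\mu$ forces $f$ to be even. Because $K$ is symmetric and convex, $\mathbf{1}_K$ is even and log-concave, so the product $g:=\mathbf{1}_K f$ is even, non-negative and log-concave, with $0<\int_{\R^n} g<\infty$ (if $\mu(K)=0$ the statement is vacuous). Applying Theorem \ref{th:ball} to $g$ with $k=n-1$ then produces a norm on $\R^n$, i.e.\ a symmetric convex body $L$ given by
$$
\|x\|_L \;=\; \left[(n-1)\int_0^\infty g(rx)\,r^{n-2}\,dr\right]^{-\tfrac{1}{n-1}}.
$$

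The next step is the bookkeeping identification $|L\cap u^\perp|=\mu_{n-1}(K\cap u^\perp)$. Writing $|L\cap u^\perp|$ in polar coordinates on $S^{n-1}\cap u^\perp$ gives
$$
|L\cap u^\perp| \;=\; \frac{1}{n-1}\int_{S^{n-1}\cap u^\perp}\rho_L(\theta)^{n-1}\,d\theta \;=\; \int_{S^{n-1}\cap u^\perp}\!\!\int_0^\infty \mathbf{1}_K(r\theta) f(r\theta)\,r^{n-2}\,dr\,d\theta,
$$
and the right-hand side is exactly $\mu_{n-1}(K\cap u^\perp)$ by polar coordinates on the hyperplane $u^\perp$. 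Hence $\rho_{I_\mu K}(u)=\rho_{IL}(u)$ on $S^{n-1}$, so $I_\mu K=IL$, and classical Busemann applied to the symmetric convex body $L$ shows that $IL$, and therefore $I_\mu K$, is convex.

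There is no substantive obstacle here: the hard analytic content is packaged in Ball's theorem, which is quoted, and in Busemann's theorem, which is quoted. The only points needing care are verifying that $\mathbf{1}_K f$ genuinely meets the hypotheses of Ball's theorem (evenness from symmetry of $K$ and $\mu$, log-concavity as a product of log-concave functions, finite positive integral) and performing the polar-coordinate computation cleanly; both are routine.
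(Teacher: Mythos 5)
Your proposal is correct and follows essentially the same route as the paper: apply Ball's theorem (Theorem \ref{th:ball}) to the even log-concave function $1_K f$ with $k=n-1$ to obtain a symmetric convex body $L$, verify by polar coordinates in $u^\perp$ that $\mu_{n-1}(K\cap u^\perp)=|L\cap u^\perp|$, and then invoke the classical Busemann theorem for $L$. The only remark is that the positivity of $\int 1_K f$ needs no vacuous-case caveat, since by Borell's characterization the support of $\mu$ is $n$-dimensional and $K$ has nonempty interior, so $\mu(K)>0$ automatically.
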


In order to generalize Theorem \ref{th:q-conv} to log-concave measures, we will first prove a version of Ball's theorem (Thm \ref{th:ball}) for $p$-convex bodies.

\begin{theorem}\label{th:log-concave}
Let $f:\R^n\rightarrow [0,\infty)$ be an even log-concave function, $k\ge 1$, and $K$ a $p$-convex body in $\R^n$ for $0<p\le 1$. Then the body $L$ defined by the Minkowski functional
$$
\|x\|_L = \left[\int_0^{\|x\|_K^{-1}} f(rx)r^{k-1}dr \right]^{-\frac 1k}, \quad x\in\R^n,
$$
is $p$-convex.
\end{theorem}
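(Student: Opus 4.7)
Plan. The approach mirrors the proof of Theorem~\ref{th:pbus}, with Brunn--Minkowski on $(k-1)$-dimensional slices replaced by the log-concavity of $f$ combined with the monotonicity of $f$ along rays from the origin. Writing $h(x):=\int_0^{\|x\|_K^{-1}} f(rx)\,r^{k-1}\,dr$ so that $\|x\|_L=h(x)^{-1/k}$, the $p$-convexity of $L$ is equivalent to $h(u_1+u_2)^{-p/k}\le h(u_1)^{-p/k}+h(u_2)^{-p/k}$ for all $u_1,u_2\in\R^n$. I fix such $u_1,u_2$ (nonzero, nonparallel), set $T_i=1/\|u_i\|_K$, and parametrize $r_i=r_i(t)\in[0,T_i]$ by $t=\frac{1}{h(u_i)}\int_0^{r_i} f(su_i)s^{k-1}\,ds$, so that $f(r_iu_i)\,r_i^{k-1}\,\frac{dr_i}{dt}=h(u_i)$. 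Then, exactly as in Theorem~\ref{th:pbus}, I define $r(t)=(r_1^{-p}+r_2^{-p})^{-1/p}$ and $\la_i=r_i^{-p}/(r_1^{-p}+r_2^{-p})$, which satisfy $\la_1+\la_2=1$, $\la_i^{1/p}r_i=r$, and $r(u_1+u_2)=\la_1^{1/p}(r_1u_1)+\la_2^{1/p}(r_2u_2)$; the $p$-convexity of $K$ guarantees $1/\|u_1+u_2\|_K\ge(T_1^{-p}+T_2^{-p})^{-1/p}$, so $r(t)$ stays in $[0,1/\|u_1+u_2\|_K]$ as $t$ runs over $[0,1]$ and $h(u_1+u_2)\ge\int_0^1 f(r(u_1+u_2))\,r^{k-1}\,\frac{dr}{dt}\,dt$.

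Three estimates combine to bound the integrand from below. The chain rule and weighted AM--GM give $\frac{dr}{dt}\ge(\la_1^{\la_1}\la_2^{\la_2})^{1/p}\bigl(\frac{dr_1}{dt}\bigr)^{\la_1}\bigl(\frac{dr_2}{dt}\bigr)^{\la_2}$, identical to the corresponding step in Theorem~\ref{th:pbus}. For the $f$ factor I rewrite $r(u_1+u_2)=\la_1(\la_1^{1/p-1}r_1u_1)+\la_2(\la_2^{1/p-1}r_2u_2)$ and apply log-concavity of $f$ to get $f(r(u_1+u_2))\ge f(\la_1^{1/p-1}r_1u_1)^{\la_1}f(\la_2^{1/p-1}r_2u_2)^{\la_2}$; since $p\le 1$ and $\la_i\in(0,1)$ force $\la_i^{1/p-1}\le 1$, and since evenness and log-concavity of $f$ make $t\mapsto f(tx)$ non-increasing on $[0,\infty)$ for each fixed $x$, the factors $\la_i^{1/p-1}$ can be dropped, yielding $f(r(u_1+u_2))\ge f(r_1u_1)^{\la_1}f(r_2u_2)^{\la_2}$. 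Finally, $r=\la_i^{1/p}r_i$ gives the identity $r^{k-1}=(\la_1^{\la_1}\la_2^{\la_2})^{(k-1)/p}r_1^{(k-1)\la_1}r_2^{(k-1)\la_2}$. Combining these three bounds and using $f(r_iu_i)\,r_i^{k-1}\,\frac{dr_i}{dt}=h(u_i)$ collapses the integrand to $(\la_1^{\la_1}\la_2^{\la_2})^{k/p}h(u_1)^{\la_1}h(u_2)^{\la_2}$.

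The last step is the weighted GM--HM manoeuvre from Theorem~\ref{th:pbus}, now with $q=p/k$ so that $1/q=k/p$: one writes $(\la_1^{\la_1}\la_2^{\la_2})^{1/q}h(u_1)^{\la_1}h(u_2)^{\la_2}=[(\la_1 h(u_1)^q)^{\la_1}(\la_2 h(u_2)^q)^{\la_2}]^{1/q}$ and applies weighted GM~$\ge$~HM to bound this below by $[h(u_1)^{-q}+h(u_2)^{-q}]^{-1/q}$, a quantity independent of $t$. Integrating over $t\in[0,1]$ yields $h(u_1+u_2)^{-p/k}\le h(u_1)^{-p/k}+h(u_2)^{-p/k}$, i.e., the $p$-convexity of $L$. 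The main obstacle is the Brunn--Minkowski replacement: in Theorem~\ref{th:pbus} the $(k-1)$-dimensional Brunn--Minkowski inequality on a slice rescaled by $\la_i^{1/p-1}$ contributes an extra $(\la_i^{1/p-1})^{k-1}$, sharpening the exponent from $k/p$ to $(1/p-1)k+1$; log-concavity alone admits no analogous gain, so the $\la_i^{1/p-1}$ factors must simply be discarded via monotonicity. This loss is precisely why the argument yields the $p$-convexity of $L$ (matching the hypothesis on $K$) rather than a sharper $q$-convexity.
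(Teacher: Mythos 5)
Your proposal is correct, but it follows a genuinely different route from the paper's proof of Theorem \ref{th:log-concave}. The paper reduces everything to the plane ${\rm span}\{x_1,x_2\}$, replaces $K$ by the $p$-convex sector $\bar K$ spanned by $x_1/\|x_1\|_K$ and $x_2/\|x_2\|_K$, and then invokes Ball's theorem (Theorem \ref{th:ball}) as a black box: a tangent line to the $p$-arc at a well-chosen point produces three points in convex position to which the triangle inequality for Ball's norm is applied, the resulting truncation parameters $t_i\in(0,1)$ are removed by the same monotonicity-along-rays property of even log-concave functions that you use, and the $p$-convexity of $\bar K$ finishes the argument. You instead rerun the Busemann-type parametrization of Theorem \ref{th:pbus} directly, with the $(k-1)$-dimensional Brunn--Minkowski step replaced by log-concavity of $f$ along the segment joining $r_1u_1$ and $r_2u_2$, discarding the factors $\lambda_i^{1/p-1}$ via ray-monotonicity; each step checks out (the identity $f(r_iu_i)r_i^{k-1}\,dr_i/dt=h(u_i)$, the containment $r\le (T_1^{-p}+T_2^{-p})^{-1/p}\le \|u_1+u_2\|_K^{-1}$ from $p$-convexity of $K$, the AM--GM bound on $dr/dt$, the exponent bookkeeping giving $(\lambda_1^{\lambda_1}\lambda_2^{\lambda_2})^{k/p}h(u_1)^{\lambda_1}h(u_2)^{\lambda_2}$, and the GM--HM step with $q=p/k$), yielding exactly $\|x_1+x_2\|_L^p\le\|x_1\|_L^p+\|x_2\|_L^p$. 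What each buys: your argument is self-contained --- it never uses Theorem \ref{th:ball}, in effect reproving the needed one-dimensional instance inline --- it treats Theorems \ref{th:pbus} and \ref{th:log-concave} by one and the same computation, and it makes transparent both where evenness enters (consistent with Example 1) and why no exponent better than $p$ comes out; the paper's proof is shorter granted Ball's theorem and isolates the planar tangent-line/convex-position trick, which is the template later adapted to non-even $s$-concave densities in Section 4. The only points left implicit in your write-up are the degenerate cases (parallel $u_1,u_2$, or $h(u_i)=0$, where the inequality is trivial) and the regularity of $r_i(t)$ when $f$ vanishes on part of the ray; these are routine and are equally glossed over in the paper's own proof of Theorem \ref{th:pbus}.
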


\begin{proof}
Fix two non-parallel vectors $x_1,x_2\in\R^n$ and denote $x_3=x_1+x_2$. We claim that $\|x_3\|_L^p \le \|x_1\|_L^p + \|x_2\|_L^p$. Consider the following 2-dimensional bodies in the plane $E={\rm span}\{x_1,x_2\}$,
$$
\bar{K}=\left\{\frac{t_1 x_1}{\|x_1\|_K}+\frac{t_2 x_2}{\|x_2\|_K}: t_1,t_2\ge 0, t_1^p +t_2^p \le 1\right\}
$$
and

 $$
\bar{L}=\left\{x\in\R^n: \|x\|_{\bar L} = \left[\int_0^{\|x\|_{\bar K}^{-1}} f(rx)r^{k-1}dr \right]^{-\frac 1k}\le 1 \right\}.
$$

One can see that the boundary of $\bar K$ consists of  a $p$-arc connecting the points  $\frac{x_1}{\|x_1\|_K}$ and $\frac{x_2}{\|x_2\|_K}$, and two straight line segments connecting the origin with these two points.  Clearly $\bar{K}$ is $p$-convex and $\bar{K}\subset K$. Also note that $\|x_i\|_{\bar K}=\|x_i\|_K$ for $i=1,2$, since $\frac{x_1}{\|x_1\|_K}$ and $\frac{x_2}{\|x_2\|_K}$ are on the boundary of $\bar{K}$, and $\|x_3\|_{\bar K} \ge \|x_3\|_K$ since $\bar{K}\subset K$. It follows that
$\|x_i\|_{\bar L}=\|x_i\|_L \,\, (i=1,2) , $ and $\|x_3\|_{\bar L} \ge \|x_3\|_L$.

 Consider the point $y = \frac{\|x_1\|_{\bar L}}{\|x_1\|_{\bar K}}x_1 + \frac{\|x_2\|_{\bar L}}{\|x_2\|_{\bar K}}x_2$ in the plane $E$. The point $\frac{y}{\|y\|_{\bar K}}$ lies on the $p$-arc connecting $\frac{x_1}{\|x_1\|_{\bar K}}$ and $\frac{x_2}{\|x_2\|_{\bar K}}$. Consider the tangent line to this arc at the point $\frac{y}{\|y\|_{\bar K}}$. This line intersects the segments $[0, {x_i}/{\|x_i\|_{\bar K}}]$, $i=1,2$, at some points
 $\frac{t_ix_i}{\|x_i\|_{\bar K}}$ with $t_i\in(0,1)$.

Since $\frac{t_1x_1}{\|x_1\|_{\bar K}}$, $\frac{t_2x_2}{\|x_2\|_{\bar K}}$ and $\frac{y}{\|y\|_{\bar K}}$ are on the same line, it follows that the
coefficients of $\frac{t_1x_1}{\|x_1\|_{\bar K}}$ and $\frac{t_2x_2}{\|x_2\|_{\bar K}}$ in the equality
$$
\frac{y}{\|y\|_{\bar K}} = \frac{1}{\|y\|_{\bar K}} \( \frac{\|x_1\|_{\bar L}}{t_1}\cdot\frac{t_1x_1}{\|x_1\|_{\bar K}} + \frac{\|x_2\|_{\bar L}}{t_2}\cdot\frac{t_2x_2}{\|x_2\|_{\bar K}} \)
$$
have to add up to 1. Therefore,
$$
\|y\|_{\bar K} = \frac{\|x_1\|_{\bar L}}{t_1} + \frac{\|x_2\|_{\bar L}}{t_2}.
$$
Note also that the line between $\frac{t_1x_1}{\|x_1\|_{\bar K}}$ and $\frac{t_2x_2}{\|x_2\|_{\bar K}}$ separates $\frac{x_3}{\|x_3\|_{\bar K}}$ from the origin, which means that the three points $\frac{t_1x_1}{\|x_1\|_{\bar K}}$, $\frac{t_2x_2}{\|x_2\|_{\bar K}}$ and $\frac{x_3}{\|x_3\|_{\bar K}}$ are in the ``convex position". Applying Ball's theorem on log-concave functions (Thm \ref{th:ball}) to these three points, we have
$$
\[\int_0^{\frac{1}{\|x_3\|_{\bar K}}} f(rx_3)r^{k-1}dr \]^{-\frac 1k}
\le \[\int_0^{\frac{t_1}{\|x_1\|_{\bar K}}} f(rx_1)r^{k-1}dr \]^{-\frac 1k}
+ \[\int_0^{\frac{t_2}{\|x_2\|_{\bar K}}} f(rx_2)r^{k-1}dr \]^{-\frac 1k}.
$$
If we let $s_i = \|x_i\|_{\bar L} \[\int_0^{\frac{t_i}{\|x_i\|_{\bar K}}} f(rx_i)r^{k-1}dr \]^{\frac 1k}$ for each $i=1,2$, the above inequality becomes $$\|x_3\|_{\bar L} \le \frac{\|x_1\|_{\bar L}}{s_1} + \frac{\|x_2\|_{\bar L}}{s_2}.$$ By a change of variables, we get
\begin{eqnarray*}
s_i = t_i \|x_i\|_{\bar L} \[\int_0^{\frac{1}{\|x_i\|_{\bar K}}} f(t_i rx_i)r^{k-1}dr \]^{\frac 1k}
\ge t_i \|x_i\|_{\bar L} \[\int_0^{\frac{1}{\|x_i\|_{\bar K}}} f(rx_i)r^{k-1}dr \]^{\frac 1k}
= t_i
\end{eqnarray*}
for each $i=1,2$. The above inequality comes from the fact that an even log-concave function has to be non-increasing on $[0,\infty)$. Indeed,
\begin{eqnarray*}
f(t_i rx_i) = f\(\frac{1+t_i}{2}\cdot rx_i - \frac{1-t_i}{2}\cdot rx_i\)
\ge f(rx_i)^{\frac{1+t_i}{2}}f(-rx_i)^{\frac{1-t_i}{2}} = f(rx_i).
\end{eqnarray*}
Putting all together, we have
\begin{eqnarray*}
  \|x_3\|_L \le \|x_3\|_{\bar L}\le \frac{\|x_1\|_{\bar L}}{s_1} + \frac{\|x_2\|_{\bar L}}{s_2}
\le \frac{\|x_1\|_{\bar L}}{t_1} + \frac{\|x_2\|_{\bar L}}{t_2} = \|y\|_{\bar K}.
\end{eqnarray*}
Using the  $p$-convexity of $\bar{K}$, we have
\begin{eqnarray*}
\|y\|_{\bar K}^p \le  \left\|\frac{\|x_1\|_{\bar L}}{\|x_1\|_{\bar K}}x_1\right\|_{\bar K}^p + \left\|\frac{\|x_2\|_{\bar L}}{\|x_2\|_{\bar K}}x_2\right\|_{\bar K}^p
=\|x_1\|_{\bar L}^p + \|x_2\|_{\bar L}^p = \|x_1\|_L^p + \|x_2\|_L^p,
\end{eqnarray*}
and therefore $\|x_3\|_L^p \le \|x_1\|_L^p + \|x_2\|_L^p$.

\end{proof}

\begin{corollary}
Let $\mu$ be a symmetric log-concave measure and $K$ a symmetric $p$-convex body in $\R^n$ for $p \in (0,1]$. Then the intersection body $I_\mu K$ of $K$ is $q$-convex with $q =\left[(1/p-1)(n-1) + 1 \right]^{-1}$.
\end{corollary}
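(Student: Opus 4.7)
The plan is to reduce the $p$-convex, log-concave measure case to the $p$-convex, Lebesgue measure case, by applying Theorem \ref{th:log-concave} to replace the measure $\mu$ with a rescaled body, just as the convex, log-concave version of Busemann was derived from Theorem \ref{th:Busemann} earlier in this section. Specifically, let $f$ denote the (even log-concave) density of $\mu$, and apply Theorem \ref{th:log-concave} with $k=n-1$ and with the given $p$-convex body $K$ to obtain a body $L\subset\R^n$ whose Minkowski functional is
$$
\|x\|_L = \left[(n-1)\int_0^{\|x\|_K^{-1}} f(rx)\, r^{n-2}\, dr\right]^{-\frac{1}{n-1}};
$$
the extra factor $(n-1)$ inside the brackets only rescales $L$ by a constant and so does not affect $p$-convexity. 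By Theorem \ref{th:log-concave}, $L$ is $p$-convex.

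Next I would verify that $I_\mu K$ and $IL$ coincide. For any $u\in S^{n-1}$, writing the integral of $f$ over $K\cap u^\perp$ in polar coordinates on $u^\perp$ gives
\begin{eqnarray*}
\mu_{n-1}(K\cap u^\perp)
&=& \int_{S^{n-1}\cap u^\perp}\int_0^{\|\theta\|_K^{-1}} f(r\theta)\, r^{n-2}\, dr\, d\theta \\
&=& \frac{1}{n-1}\int_{S^{n-1}\cap u^\perp} \rho_L(\theta)^{n-1}\, d\theta
 \;=\; |L\cap u^\perp|,
\end{eqnarray*}
so $\rho_{I_\mu K}(u)=\rho_{IL}(u)$ for all $u\in S^{n-1}$, and hence $I_\mu K = IL$.

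Finally, since $L$ is a symmetric $p$-convex body in $\R^n$, Theorem \ref{th:q-conv} yields that $IL$ is $q$-convex with $q=\left[(1/p-1)(n-1)+1\right]^{-1}$. Combining this with the identity $I_\mu K = IL$ proves the corollary.

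There is no real obstacle beyond correctly chaining the ingredients: Theorem \ref{th:log-concave} is precisely the tool that converts the log-concave weighting into a $p$-convex body while preserving $p$-convexity, and Theorem \ref{th:q-conv} handles the resulting intersection body. The only technicality worth checking is the normalizing constant in the definition of $\|\cdot\|_L$, which must be chosen so that the polar integral computation recovers the section volume of $L$ exactly; this is why the factor $(n-1)$ is inserted above.
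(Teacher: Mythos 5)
Your proposal is correct and follows essentially the same route as the paper: apply Theorem \ref{th:log-concave} (with $k=n-1$) to the density of $\mu$ and the body $K$ to get a symmetric $p$-convex body $L$, verify via polar coordinates that $I_\mu K = IL$, and then invoke the $p$-convex Busemann theorem (the paper cites Theorem \ref{th:pbus}, you cite its corollary Theorem \ref{th:q-conv}, which is the same thing with $k=n-1$). The normalization by $(n-1)$ and the identification $\mu_{n-1}(K\cap u^\perp)=|L\cap u^\perp|$ are handled exactly as in the paper, so there is nothing to add.
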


\begin{proof}
Let $f$ be the density function of $\mu$. By Theorem \ref{th:log-concave}, the body $L$ with the Minkowski functional
$$
\|x\|_L = \left[(n-1)\int_0^{\|x\|_K^{-1}} f(rx)r^{n-2}dr\right]^{\frac{-1}{n-1}},\quad x\in\R^n,
$$
is $p$-convex.

On the other hand, the intersection body $I_\mu K$ of $K$ is given by the radial function
\begin{eqnarray*}
\rho_{I_\mu K}(u) &=& \mu_{n-1}(K\cap u^\bot)  \\
&=& \int_{\R^n} 1_{K\cap u^\bot}(x)f(x) dx \quad=\quad \int_{S^{n-1}\cap u^\bot} \int_0^{\|u\|_K^{-1}}f(rv)r^{n-2} drdv \\
&=& \frac{1}{n-1}\int_{S^{n-1}\cap u^\bot} \|v\|_L^{-n+1}dv \quad=\quad |L\cap u^\bot|_{n-1} \\ &=& \rho_{IL}(u),
\end{eqnarray*}
which means $I_\mu K = IL$. By Theorem \ref{th:pbus}, $IL$ is $q$-convex with $q =\left[ (1/p-1)(n-1) +1 \right]^{-1}$, and therefore so is $I_\mu K$.

\end{proof}

We conclude this section with an example that shows that  the condition on $f$ to be even in Theorem \ref{th:log-concave} cannot be dropped.

\noindent{\bf Example 1}.
Let $\mu$ be a log-concave measure on $\R^n$ with density
$$f(x_1,\ldots,x_n)=\left\{\begin{array}{ll} 1, &\text{if }x_1+x_2 \ge 2^{1-1/p},\\
0, &\text{otherwise.}\end{array}
\right.$$  Consider the $p$-convex body $K=B_p^n$ for $p\in(0,1)$. If $L$ is the body defined in Theorem \ref{th:log-concave}, then  $\|e_1+e_2\|_L=0$ and $\|e_1\|_L=\|e_2\|_L>0$, which means $L$ is not $q$-convex for any $q>0$.\\

\section{Non-symmetric cases and $s$-concave measures}

Note that Ball's theorem (Thm \ref{th:ball}) remains valid even if $f$ is not even, as was shown by Klartag \cite{Kl}. On the other hand,
as we explained above, Theorem \ref{th:log-concave} does not hold for non-symmetric log-concave measures. However, if we restrict ourselves to
the class of $s$-concave measures, $s>0$, then it is possible to give a version of Theorem \ref{th:log-concave} for non-symmetric measures.

Borell \cite{Bor} introduced the classes $\mathfrak{M}_s(\Omega)$, ($-\infty
\le s \le \infty$, $\Omega \subset \R^n$ open convex) of $s$-concave
measures, which are Radon measures $\mu$ on $\Omega$ satisfying the
following condition: the inequality
$$
\mu(\la A + (1-\la)B) \ge \left[\la \mu(A)^s + (1-\la)\mu(B)^s
\right]^{\frac 1s}
$$
holds for all nonempty compact $A, B \subset \Omega$ and all $\la \in
(0,1)$. In particular,   $s=0$ gives the class of log-concave measures.
%The classes $\mathfrak{M}_s(\Omega)$ have the property that for every $s_1$, $s_2$ with $s_1 \le s_2$, $\mathfrak{M}_{s_1}(\Omega) \supset %\mathfrak{M}_{s_2}(\Omega).$

Let us consider the case $0<s<1/n$. According to Borell, $\mu$ is
$s$-concave if and only if the support of $\mu$ is $n$-dimensional
and $d\mu = fdm$ for some $f \in L^1_{loc}(\Omega)$ such that
$f^{\frac{s}{1-ns}}$ is a concave function on $\Omega$.

\begin{theorem}
Let $\mu$ be an $s$-concave measure on $\Omega\subset\R^n$ with density $f$, for $0<s<1/n$, and $K$ a $p$-convex body in $\Omega$, for $p\in (0,1]$. If $k\ge 1$, then the body $L$ whose Minkowski functional is given by
$$
\|x\|_L = \left[\int_0^\infty 1_K(rx)f(rx)r^{k-1}dr\right]^{-\frac 1k},\quad x\in\R^n
$$
is $q$-convex with $q= \[\big(\frac 1p-1\big)\big(\frac 1s-n\big)\frac 1k + \frac 1p\]^{-1}$.
\end{theorem}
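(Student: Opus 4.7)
The plan is to mirror the proof of Theorem~\ref{th:log-concave} closely, replacing the even-log-concave Ball's theorem (Theorem~\ref{th:ball}) by an analogous statement for $s$-concave functions. Since, as noted in the paragraph preceding the statement, Klartag~\cite{Kl} removed the evenness assumption from Ball's theorem, and the Borell--Brascamp--Lieb inequality supplies the appropriate $s$-concave version of the underlying one-dimensional inequality, no symmetry hypothesis on $\mu$ is needed here.

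First, I would fix two non-parallel vectors $x_1, x_2 \in \R^n$ and set $x_3 = x_1 + x_2$; the goal is $\|x_3\|_L^q \le \|x_1\|_L^q + \|x_2\|_L^q$. Restricting to the plane $E = \mathrm{span}\{x_1, x_2\}$, I would replace $K \cap E$ by its two-dimensional $p$-convex subbody $\bar K$ bounded by the $p$-arc joining $x_1/\|x_1\|_K$ to $x_2/\|x_2\|_K$ together with the two radial segments, and define $\bar L$ analogously. As in Theorem~\ref{th:log-concave}, this gives $\|x_i\|_{\bar L} = \|x_i\|_L$ for $i = 1, 2$ and $\|x_3\|_{\bar L} \ge \|x_3\|_L$, reducing the problem to a two-dimensional inequality. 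The tangent-line construction in the plane then produces three points $t_i x_i / \|x_i\|_{\bar K}$ ($i=1,2$) and $x_3 / \|x_3\|_{\bar K}$ in convex position, exactly as before.

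At this step I would invoke the $s$-concave analog of Ball's theorem applied to these three points: rather than the ordinary triangle-type inequality arising in the log-concave case, the Borell--Brascamp--Lieb inequality applied to the one-dimensional radial integrands $r \mapsto f(rx_i)\,r^{k-1}$ (using that $f^{\al}$ is concave with $\al = s/(1-ns)$, so $1/\al = 1/s - n$) yields a power-mean inequality whose exponent reflects the factor $(1/s - n)/k$. Combining this three-point inequality with the $p$-convexity of $\bar K$ (which enters, exactly as in the proof of Theorem~\ref{th:log-concave}, through the bound $\|y\|_{\bar K}^p \le \|x_1\|_L^p + \|x_2\|_L^p$) and tracking exponents carefully should produce $q = \big[(1/p - 1)(1/s - n)/k + 1/p\big]^{-1}$. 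The main technical obstacle is identifying the correct $s$-concave three-point inequality and, critically, replacing the monotonicity step $f(t_i r x_i) \ge f(r x_i)$ --- which in Theorem~\ref{th:log-concave} used evenness of $f$ to conclude that an even log-concave function is radially non-increasing --- by a bound that instead exploits the $s$-concavity of $f$ along each radial ray. Useful sanity checks on the resulting formula are the boundary cases $p = 1$ (which forces $q = 1$, i.e., convexity of $L$) and $s \to (1/n)^{-}$ (which gives $q = p$, so that $L$ inherits the $p$-convexity of $K$).
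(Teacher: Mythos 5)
Your overall plan --- mimic the proof of Theorem \ref{th:log-concave} and inject $s$-concavity where evenness was used --- is not how the paper argues, and it has a quantitative gap I do not see how to close. First, the three-point inequality is not where the factor $(1/s-n)/k$ can enter: an $s$-concave density with $s>0$ is in particular log-concave, so the strongest available Ball/Klartag statement at that step is exactly the same subadditivity of $x\mapsto\left[\int_0^\infty (1_Hf)(rx)r^{k-1}dr\right]^{-1/k}$ as in the log-concave proof (with the symmetric slab replaced by a half-plane, using Klartag's non-even version); Borell--Brascamp--Lieb gives nothing stronger there. Hence the entire loss must come from the step you correctly flag, the replacement of $f(t_irx_i)\ge f(rx_i)$: the best substitute from $s$-concavity along rays is $f(t_irx_i)\ge t_i^{1/s-n}f(rx_i)$, which gives $s_i\ge t_i^{1+\beta}$ with $\beta=(1/s-n)/k$. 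But if you then run the rest of the proof of Theorem \ref{th:log-concave} verbatim, you arrive at $\|x_3\|_L\le \|x_1\|_{\bar L}\,t_1^{-1-\beta}+\|x_2\|_{\bar L}\,t_2^{-1-\beta}$, and with the tangent-line values $t_1=a_0^{1-p}$, $t_2=b_0^{1-p}$ (where $y$ is as in that proof, $a_0=\|x_1\|_{\bar L}/\|y\|_{\bar K}$, $b_0=\|x_2\|_{\bar L}/\|y\|_{\bar K}$, $a_0^p+b_0^p=1$) this equals $\|y\|_{\bar K}\,(a_0^{\theta}+b_0^{\theta})$ with $\theta=p-(1-p)\beta$. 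One checks $\theta<q$ strictly whenever $p<1$ and $\beta>0$ (indeed $\theta$ can be negative), so in the asymmetric regime $a_0\to 0$ this bound exceeds the target $(\|x_1\|_L^q+\|x_2\|_L^q)^{1/q}=\|y\|_{\bar K}(a_0^q+b_0^q)^{1/q}$; the two sides match only at the symmetric point $a_0=b_0$. In other words, the multiplicative losses $t_i^{-\beta}$ cannot be absorbed into a smaller convexity exponent in this additive form, and ``tracking exponents carefully'' does not rescue the argument.

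The paper's proof is structurally different and avoids this problem. It makes the substitution $r=s^{-1/p}$, writing $\|x_i\|_L^{-k}=\frac1p\int_0^\infty F_i(s)\,ds$ with $F_i(s)=1_K(s^{-1/p}x_i)f(s^{-1/p}x_i)s^{-k/p-1}$, proves the pointwise inequality $F_1(s_1)^{\lambda_1}F_2(s_2)^{\lambda_2}\le 2^{k/q+1}F_3(s_1+s_2)$ --- this is where the $p$-convex combination coefficients $\lambda_i^{1/p}$ interact with the concavity of $f^{s/(1-ns)}$ and produce exactly the exponent $(1/p-1)(1/s-n)$ --- then passes to superlevel sets and the one-dimensional Brunn--Minkowski inequality to obtain $\|x_1+x_2\|_L^{-k}\ge 2^{-k/q-1}\big(\|x_1\|_L^{-k}+\|x_2\|_L^{-k}\big)$, and finally converts this harmonic-type inequality, constant included, exactly into $q$-convexity via power-mean inequalities, including in the asymmetric regime where your additive bound breaks down. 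Your sanity checks on the formula for $q$ are correct, but the proof needs an inequality of this harmonic/level-set type rather than a patched version of the log-concave argument.
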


\begin{proof}
Let $x_1,x_2 \in \R^{n}$ and $x_3=x_1+x_2$. Then, for $i=1,2$,
\begin{eqnarray*}
\|x_i\|_L^{-k} &=& \int_0^\infty 1_K(rx_i) f(rx_i)r^{k-1}dr =
\frac 1p\int_0^\infty 1_K(s^{-\frac 1p}x_i) f(s^{-\frac 1p}x_i) s^{-\frac kp-1}ds \\
&=& \frac 1p\int_0^\infty F_i(s)ds =
\frac 1p\int_0^\infty \Big|\{s \in (0,\infty) : F_i(s)>t\}\Big|\, dt,
\end{eqnarray*}
where $F_i(s) = 1_K(s^{-\frac 1p}x_i) f(s^{-\frac 1p}x_i)
s^{-\frac kp-1}$ for each $i=1,2,3$.  We claim that
$$
2^{\frac kq+1} F_3(s_3) \ge F_1(s_1)^{\la_1}F_2(s_2)^{\la_2}
$$
whenever $s_3 = s_1 + s_2$ and $\la_i = \frac{s_i}{s_1+s_2}$ for
$i=1,2$. Indeed, since
\begin{eqnarray*}
s_3^{-\frac 1p}x_3 &=& \left(\frac{s_1}{s_1+s_2}\right)^{\frac
1p}s_1^{-\frac 1p}x_1 + \left(\frac{s_2}{s_1+s_2}\right)^{\frac
1p}s_2^{-\frac 1p}x_2 \\
&=& \la_1(\la_1^{\frac 1p -1}s_1^{-\frac 1p}x_1) +
\la_2(\la_2^{\frac 1p -1}s_2^{-\frac 1p}x_2),
\end{eqnarray*}
the concavity of $f^\ga$, $\ga=\frac{s}{1-ns}$, gives
\begin{eqnarray*}
f^\ga(s_3^{-\frac 1p}x_3) &\ge& \la_1 f^\ga(\la_1^{\frac 1p-1}s_1^{-\frac 1p}x_1)
+ \la_2 f^\ga(\la_2^{\frac 1p -1}s_2^{-\frac 1p}x_2) \\
&\ge& \[ f^\ga(\la_1^{\frac 1p -1}s_1^{-\frac 1p}x_1)\]^{\la_1}
\[f^\ga(\la_2^{\frac 1p -1}s_2^{-\frac 1p}x_2) \]^{\la_2}\\
&\ge& \[\la_1^{\frac 1p -1} f^\ga(s_1^{-\frac 1p}x_1)\]^{\la_1}
\[\la_2^{\frac 1p -1}f^\ga(s_2^{-\frac 1p}x_2) \]^{\la_2}\\
&=& \( \[\(\frac{s_1}{s_3}\)^{\frac 1\ga(\frac 1p-1)} f(s_1^{-\frac 1p}x_1) \]^{\la_1}
\[\(\frac{s_2}{s_3}\)^{\frac 1\ga(\frac 1p -1)} f(s_2^{-\frac 1p}x_2) \]^{\la_2} \)^\ga ,
\end{eqnarray*}
that is,
$$
s_3^{\frac 1\ga(\frac 1p -1)} f(s_3^{-\frac 1p}x_3) \ge
\prod_{i=1}^2 \left[ s_i^{\frac 1\ga(\frac 1p -1)} f(s_i^{-\frac 1p}x_i) \right]^{\la_i}.
$$
On the other hand, note that
$$
\frac{2}{s_3} = \frac{\la_1}{s_1} + \frac{\la_2}{s_2}
\ge \(\frac{1}{s_1}\)^{\la_1} \(\frac{1}{s_2}\)^{\la_2}
$$ and
$$
1_K(s_3^{-\frac 1p}x_3) \ge 1_K(s_1^{-\frac 1p}x_1) 1_K(s_2^{-\frac 1p}x_2),
$$
since $s_3^{-\frac 1p}x_3 = \la_1^{\frac 1p}(s_1^{-\frac 1p}x_1) + \la_2^{\frac 1p
}(s_2^{-\frac 1p}x_2)$. Thus
\begin{eqnarray*}
F_1(s_1)^{\la_1} F_2(s_2)^{\la_2}
&=& \prod_{i=1}^2 \[1_K(s_i^{-\frac 1p}x_i) f(s_i^{-\frac 1p}x_i)s_i^{-\frac kp-1} \]^{\la_i} \\
&\le& 1_K(s_3^{-\frac 1p}x_3) \prod_{i=1}^2 \[ s_i^{\frac 1\ga(\frac 1p-1)}
f(s_i^{-\frac 1p}x_i) \cdot \(\frac{1}{s_i}\)^{\frac 1\ga (\frac 1p-1) + \frac kp+1}\]^{\la_i} \\
&\le& 1_K(s_3^{-\frac 1p}x_3) f(s_3^{-\frac 1p}x_3)
2^{\frac 1\ga (\frac 1p-1)+\frac kp+1} s_3^{-\frac kp-1} \\
&\le& 2^{\frac kq +1} F_3(s_3).
\end{eqnarray*}
It follows that for every $t>0$
$$
\{s_3 : 2^{\frac kq +1} F_3(s_3)>t\}  \supset \{s_1 : F_1(s_1)>t\}+ \{s_2 : F_2(s_2)>t\}.
$$
Applying the Brunn-Minkowski inequality, we have
\begin{eqnarray*}
\|x_1+x_2\|_L^{-k} &=& \|x_3\|_L^{-k} = \frac 1p\int_0^\infty F_3(s)ds \\
&=& \frac{1}{2^{\frac kq +1}} \cdot \frac 1p\int_0^\infty \Big|\{s_3 \in
(0,\infty) : 2^{\frac kq +1}F_3(s_3)>t\}\Big|\, dt \\
&\ge& \frac{1}{2^{\frac kq+1}} \cdot \frac 1p\int_0^\infty
\(\Big|\{s_1 : F_1(s_1)>t\}\Big| + \Big|\{s_2 :F_2(s_2)>t\}\Big| \) dt \\
&=& \frac{1}{2^{\frac kq+1}} (\|x_1\|_L^{-k} + \|x_2\|_L^{-k}).
\end{eqnarray*}
Thus,
\begin{eqnarray*}
\|x_1+x_2\|_L &\le& 2^{\frac 1q} \(\frac{\|x_1\|_L^{-k} + \|x_2\|_L^{-k}}{2}\)^{-\frac 1k}
= \[ \frac 12 \( \frac{(\|x_1\|_L^{-q})^{\frac kq} + (\|x_1\|_L^{-q})^{\frac kq}}{2} \)^{\frac qk}\]^{-\frac 1q} \\
&\le& \[ \frac 12 \( \frac{\|x_1\|_L^{-q} + \|x_2\|_L^{-q} }{2} \)
\]^{-\frac 1q}
\le \[ \frac 12 \( \frac{\|x_1\|_L^q + \|x_2\|_L^q}{2} \)^{-1} \]^{-\frac 1q}\\
&=& \( \|x_1\|_L^q + \|x_2\|_L^q \)^{\frac 1q},
\end{eqnarray*}
which means that $L$ is $q$-convex.

\end{proof}

The following example shows that the value of $q$ in the above theorem is sharp.\\

\noindent{\bf Example 2.}
Let $\mu$ be an $s$-concave measure on $\Omega=\{(x_1,\ldots,x_n)\in\R^n: x_1\ge 0\}$ for $s>0$ with density $$f(x_1,\ldots,x_n)=|x_1|^{1/s-n}$$
and let
$$
K=\left\{(x_1,\ldots,x_n):  x_1 \ge 0, \left|\frac{x_1+x_2}{2}\right|^p+\left|\frac{x_1-x_2}{2}\right|^p\le 1, \,\, |x_i|\le 1 \,\,\forall i=3,\ldots,n\right\}.
$$
Note that $\|e_1\|_K = 2^{1-1/p}$ and $\|e_1+e_2\|_K=\|e_1-e_2\|_K=1$. If $L$ is the body defined by $K$ in the above theorem, then
$$
\|e_1\|_L = \[\int_0^{2^{1-1/p}}r^{1/s-n}r^{k-1}dr\]^{-\frac 1k}= \[\frac{2^{\(1-\frac 1p\)\(\frac 1s-n+k\)}}{\frac 1s-n+k}\]^{-\frac1k}
$$
and
$$
\|e_1+e_2\|_L = \[\int_0^1 r^{1/s-n}r^{k-1}dr\]^{-\frac 1k}= \[\frac 1s-n+k\]^{\frac 1k}.
$$
If $L$ is $q$-convex for some $q$, then the inequality $\|2e_1\|_L \le \(\|e_1+e_2\|_L^q +\|e_1-e_2\|_L^q\)^{1/q}$ implies
$$
2\[\frac{2^{\(1-\frac 1p\)\(\frac 1s-n+k\)}}{\frac 1s-n+k}\]^{-\frac 1k} \le 2^{\frac 1q}\[\frac 1s-n+k\]^{\frac 1k}
$$
that is,
$$
q\le \[\(\frac 1p-1\)\(\frac 1s-n\)\frac 1k+\frac 1p\]^{-1}.
$$

Note that in our construction $\Omega$ is not open, as opposed to what
we said in the beginning of Section 4. This is done for the sake of simplicity of the
presentation. To be more precise one would need to define
$\Omega=\{(x_1,\ldots,x_n)\in\R^n: x_1>-\epsilon\}$ and
$f(x_1,\ldots,x_n)=|x_1+\epsilon|^{1/s-n}$, for $\epsilon>0$, and then
send $\epsilon \to 0^+$.

 \end{document}